\numberwithin{equation}{section}
\newtheorem{dfn}{Definition}[section]
\newtheorem{thm}[dfn]{Theorem}
\newtheorem{lma}[dfn]{Lemma}
\newtheorem{rmrk}[dfn]{Remark}
\DeclarePairedDelimiterX{\norm}[1]{\lVert}{\rVert}{#1}
\DeclarePairedDelimiterX{\bnorm}[1]{\big\lVert}{\big\rVert}{#1}
\DeclarePairedDelimiterX{\Bnorm}[1]{\Big\lVert}{\Big\rVert}{#1}
\newcommand{\R}{\mathbb{R}}
\newcommand{\N}{\mathbb{N}}
\newcommand{\Nat}{\mathbb{N}}
\newcommand{\Tr}{\operatorname{Tr}}
\newcommand{\Mcal}{\mathcal{M}}
\newcommand{\Hcal}{\mathcal{H}}
\newcommand{\Bcal}{\mathcal{B}}
\newcommand{\M}{\mathcal{M}}
\def\Re{{\mathrm{Re}\,}}
\begin{document}
	\title[Approximation of the spectral action functional]
{Approximation of the spectral action\\ functional in the case of $\tau$-compact\\ resolvents}

\author[Chattopadhyay] {Arup Chattopadhyay}
\address{Department of Mathematics, Indian Institute of Technology Guwahati, Guwahati, 781039, India}
\email{arupchatt@iitg.ac.in, 2003arupchattopadhyay@gmail.com}

\author[Pradhan]{Chandan Pradhan}
\address{Department of Mathematics, Indian Institute of Technology Guwahati, Guwahati, 781039, India}
\email{chandan.math@iitg.ac.in, chandan.pradhan2108@gmail.com}

\author[Skripka] {Anna Skripka}
\address{Department of Mathematics and Statistics, University of New Mexico, 311 Terrace Street NE, Albuquerque, NM 87106, USA}
\email{skripka@math.unm.edu}

\subjclass{Primary 47A55}

\keywords{Spectral action functional, multiple operator integral}

\date{July 13, 2023}
	\begin{abstract}
	We establish estimates and representations for the remainders of Taylor approximations of the spectral action functional $V\mapsto\tau(f(H_0+V))$ on bounded self-adjoint perturbations, where $H_0$ is a self-adjoint operator with $\tau$-compact resolvent in a semifinite von Neumann algebra and $f$ belongs to a broad set of compactly supported functions including $n$-times differentiable functions with bounded $n$-th derivative. Our results significantly extend analogous results in \cite{SkAnJOT}, where $f$ was assumed to be compactly supported and $(n+1)$-times continuously differentiable. If, in addition, the resolvent of $H_0$ belongs to the noncommutative $L^n$-space, stronger estimates are derived and extended to noncompactly supported functions with suitable decay at infinity.
\end{abstract}

\maketitle
\section{Introduction}
Let $\Mcal$ be a semifinite von Neumann algebra acting on a separable Hilbert space $\Hcal$ equipped with a normal faithful semifinite trace $\tau$. Let $H_0$ be a closed densely defined self-adjoint operator affiliated with $\Mcal$ and assume that its resolvent is $\tau$-compact. Examples of such operators include differential operators on compact Riemannian manifolds (see, e.g., \cite[Chapter 3, Section B]{BE86} or \cite[Chapter 3, Section 6]{Kato}) and generalized Dirac operators of unital spectral triples (see, e.g., \cite{{NuSkJST}}).
For a sufficiently nice function $f$ and a self-adjoint element $V$ in $\Mcal$, which models a gauge potential, we consider a spectral action functional $V\mapsto\tau(f(H_0+V))$. The latter was introduced in \cite{CC97} to encompass different field actions in noncommutative geometry and recently received considerable attention in the literature (see, e.g., \cite{EI18}, for methods, examples, and open problems). Counterparts of the spectral action functional also arise in problems of mathematical physics on deterministic and random Dirac and Schr\"{o}dinger operators (see, e.g., \cite{S21,ST19}) and in the study of the spectral flow \cite{ACD}.

A perturbation approach to the spectral action functional in the setting of the algebra of bounded linear operators $\Bcal(\Hcal)$ was taken in \cite{vNvS21a}, where a noncommutative analog of the Taylor series expansion served as a starting point to understanding the structure of gauge fluctuations. Analytical aspects of Taylor approximations of the spectral action functional in the $\Bcal(\Hcal)$ setting were also studied in \cite{SA,SJ}, and the results of \cite{vNvS21a,SA,SJ} served as a starting point for a one-loop quantization of the spectral action obtained in \cite{oneloop}.
There are also important examples of spectral triples that are successfully studied in a more general semifinite von Neumann algebra framework (see, e.g., \cite{AM06}), and there are interesting models with functions $f$ not included or not conveniently described by the framework of \cite{SA,SJ} (see, e.g., \cite{CCS}).

Motivated by the aforementioned results and outlooks, this paper extends the perturbation theory of the spectral action functional to the setting of a semifinite von Neumann algebra, significantly relaxes smoothness assumptions imposed on admissible functions $f$ in \cite{SA,SkAnJOT} in the case of $\tau$-compact resolvents and, in addition, removes the compact support assumption on $f$ in the case of $\tau$-Schatten-von Neumann resolvents. Our description of admissible $f$ is given in terms of their decay properties, does not resort to the Laplace transform and even property involved in \cite{SJ}, and is similar in nature to the description in \cite{vNvS21a}.

Given a natural number $n\in\mathbb{N}$ and suitable $f$ and $H_0$, we denote the $n$-th order Taylor remainder of the spectral action functional $V\mapsto\tau\big(f(H_0+V)\big)$ by
\begin{align}\label{a5}
	\tau\left(\mathcal{R}_{H_0,f,n}(V)\right)=\tau\Big(f(H_0+V)-f(H_0)
	-\sum_{k=1}^{n-1}\,\frac{1}{k!}\frac{d^k}{ds^k}\,f(H_0+sV)\big\lvert_{s=0}\Big).
\end{align}
In Theorem \ref{mainthm} we establish that
\begin{align}\label{intro1}
	\left|\tau\big(\mathcal{R}_{H_0,f,n}(V)\big)\right|\leq D_{a,b,n,H_0,V} \|f^{(n)}\|_\infty
\end{align} for every self-adjoint $H_0$ with $\tau$-compact resolvent
and every $n$-times differentiable compactly supported in $(a,b)$ function $f$ with bounded $f^{(n)}$ and we derive an upper bound on the constant $D_{a,b,n,H_0,V}$ revealing explicit dependence on $H_0,V,n,a,b$.
We also establish the trace formula
\begin{align}
	\label{tf}
	\tau\big(\mathcal{R}_{H_0,f,n}(V)\big)
	=\int_\R f^{(n)}(\lambda)\eta_{n,H_0,V}(\lambda)d\lambda
\end{align}
for every $n$-times differentiable compactly supported in $(a,b)$ function $f$ such that $f^{(n)}$ exists almost everywhere and $f^{(n)}\in L^2(\R)$. The real-valued function $\eta_{n,H_0,V}\in L^1((a,b))$ is determined by \eqref{tf} uniquely up to a polynomial summand of degree at most $n-1$.

In Theorem~\ref{resolvent trace thm} we relax the differentiability assumption and remove the support restriction on functions $f$ satisfying \eqref{tf} under the stronger assumption on the operator $H_0$. Namely, in the case when $(H_0-iI)^{-1}$ belongs to the $\tau$-Schatten-von Neumann ideal associated with $(\Mcal,\tau)$ (see the definition \eqref{tausvn}), we establish \eqref{tf} for $(n-1)$-times continuously differentiable functions with suitable decay at infinity.
The locally integrable real-valued function $\eta_{n,H_0,V}$ is determined by \eqref{tf} uniquely up to a polynomial summand of degree at most $n-1$ and it satisfies the estimate
\begin{align*}
	|\eta_n(x)|\leq \text{ const}_n\,(2+\|V\|)\, \|V\|^{n-1}\,\|(H_0-iI)^{-1}\|_n^n\,(1+|x|)^n
\end{align*}
for every $x\in\R$, where $\|\cdot\|$ denotes the operator norm and $\|\cdot\|_n$ the noncommutative $L^n$-norm.

Our bound \eqref{intro1} extends the analogous bound of \cite{SkAnJOT}, where the additional assumption $f\in C_c^{n+1}((a,b))$ was imposed.
The formula \eqref{tf} was earlier established under the additional restriction $f\in C_c ^3((a,b))$ in the case $n=1$ in \cite[Theorem 2.5]{ACD} and under the restriction $f\in C_c^{n+1}((a,b))$ in the case $n\ge 2$ in \cite{SkAnJOT}. Other results in this direction were obtained in \cite[Theorem 18 and Corollary 19]{SJ} and \cite[Theorems 3.4 and 3.10]{SA} in the particular case when $\Mcal=\Bcal(\Hcal)$.
The result of Theorem~\ref{resolvent trace thm} relaxes the differentiability assumption made in \cite[Theorem 4.1]{NuSkJST} and extends the trace formula from the setting of $(\Bcal(\Hcal),\Tr)$ to the setting of a general semi-finite $(\Mcal,\tau)$.

The paper is organized as follows: preliminary results are discussed in Section~2; our main results are proved in Section~3 and Section~4 under the assumptions that $H_0$ has $\tau$-compact resolvent and that the resolvent of $H_0$ belongs to the noncommutative $L^n$-space, respectively.

\section{Preliminaries and notation}
We denote positive constants by letters $c,d,C,D$ with subscripts indicating dependence on their parameters. For instance, the symbol $c_\alpha$ denotes a constant depending only on the parameter $\alpha$.

\paragraph{\bf Function spaces.}	
Let $n\in\mathbb N$ and let $X$ be an interval in $\mathbb{R}$ possibly coinciding with $\mathbb{R}$. Let $B(X)$ denote the space of all bounded functions on $X$, $C(X)$ the space of all continuous functions on $X$, $C_0(\R)$ the space of continuous functions on $\R$ decaying to $0$ at infinity, $C^n(X)$ the space of $n$-times continuously differentiable functions on $X$. Let $C_b^n(X)$ denote the subset of $C^n(X)$ of such $f$ for which $f^{(n)}$ is bounded. Let $C_c^n(\mathbb{R})$ denote the subspace of $C^n(\mathbb{R})$ consisting of compactly supported functions.
We also use the notation $C^0(\mathbb{R})=C(\mathbb{R})$. Let $a,b\in\mathbb{R}$. Let $C_c^n((a,b))$ denote the subspace of $C_c^n(\R)$ consisting of the functions whose closed support is contained in $(a,b)$, let $D_c^n((a,b))$ denote the space of $n$-times differentiable functions in $C_c((a,b))$, and let $F_c^n((a,b))$ denote the subspace of $C_c^{n-1}((a,b))$ consisting of $f$ such that $f^{(n)}$ exists almost everywhere in $(a,b)$ and $f^{(n)}\in L^2((a,b))$. We note that for every $f\in F_c^n((a,b))$ the function $f^{(n-1)}$ is absolutely continuous. We write $f(x)=o(g(x))$ if for all $\epsilon>0$, we have $|f(x)|\leq\epsilon g(x)$ for all $x$ outside a compact set depending on $\epsilon$.
	
	Given $f\in L^1(\R)$, let $\hat{f}$ denote the Fourier transform of $f$.
	We will use the well-known property that every $f\in C_c^n(\R)$ satisfies $\widehat{f^{(n-1)}}\in L^1(\R)$.
	
	We will need the following elementary lemma.
	\begin{lma}\label{l3}
		Let $a,b\in\R,~a<b$, $k\in\mathbb{N}$, and $f\in D_c^k((a,b))$ be such that $f^{(k)}\in B([a,b])$. Then,
		\begin{align*}
			\|f^{(j)}\|_{\infty}\leq (b-a)^{k-j} \|f^{(k)}\|_{\infty}, \quad j=0,\dots,k.
		\end{align*}
	\end{lma}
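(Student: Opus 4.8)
The plan is to use the fundamental theorem of calculus iteratively, starting from the top derivative $f^{(k)}$ and working downward. Since $f\in D_c^k((a,b))$, the function $f$ and all its derivatives up to order $k-1$ vanish outside a compact subset of $(a,b)$; in particular $f^{(j)}(a)=0$ for $j=0,\dots,k-1$. For each $j\in\{0,\dots,k-1\}$ and each $x\in[a,b]$ I would write
\begin{align*}
	f^{(j)}(x)=\int_a^x f^{(j+1)}(t)\,dt,
\end{align*}
using that $f^{(j)}$ is absolutely continuous (it is continuously differentiable for $j\le k-2$, and for $j=k-1$ it is differentiable with $f^{(k)}\in B([a,b])$, hence Lipschitz, hence absolutely continuous). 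Taking absolute values gives $|f^{(j)}(x)|\le (b-a)\,\|f^{(j+1)}\|_\infty$, and taking the supremum over $x$ yields $\|f^{(j)}\|_\infty\le (b-a)\,\|f^{(j+1)}\|_\infty$.

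Next I would iterate this one-step bound. Applying it successively for the indices $j,j+1,\dots,k-1$ gives
\begin{align*}
	\|f^{(j)}\|_\infty\le (b-a)\|f^{(j+1)}\|_\infty\le (b-a)^2\|f^{(j+2)}\|_\infty\le\cdots\le (b-a)^{k-j}\|f^{(k)}\|_\infty,
\end{align*}
which is exactly the claimed inequality. The case $j=k$ is trivial since $(b-a)^0=1$. One should also note that $\|f^{(j)}\|_\infty<\infty$ for all $j$: this follows inductively from the top down, since $\|f^{(k)}\|_\infty<\infty$ by hypothesis and each step multiplies by the finite constant $b-a$; alternatively each $f^{(j)}$ with $j\le k-1$ is continuous with compact support hence bounded.

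There is no real obstacle here; the only point requiring a moment's care is the justification of the fundamental theorem of calculus at the top level $j=k-1$, where $f^{(k)}$ is merely assumed bounded (and to exist everywhere on $(a,b)$, by membership in $D_c^k((a,b))$) rather than continuous. This is handled by the observation that a function whose derivative exists everywhere and is bounded is Lipschitz, hence absolutely continuous, so that $f^{(k-1)}(x)-f^{(k-1)}(a)=\int_a^x f^{(k)}(t)\,dt$ holds; the support condition makes the lower boundary term vanish. Everything else is the elementary estimate above.
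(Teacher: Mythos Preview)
Your proof is correct and is exactly the natural elementary argument; the paper itself does not supply a proof, merely labeling the lemma ``elementary,'' and your iterated application of the fundamental theorem of calculus (with the Lipschitz/absolute continuity observation at level $j=k-1$) is precisely what is intended.
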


	\smallskip
	
	\paragraph{\bf Operators with $\tau$-compact resolvent.}
	In the sequel we fix a semifinite von Neumann algebra $\Mcal$ endowed with a normal faithful semifinite trace $\tau$ and acting on a separable Hilbert space $\Hcal$. An example of such pair $(\Mcal,\tau)$ is the algebra of bounded linear operators $\Bcal(\Hcal)$ on $\Hcal$ equipped with the canonical trace $\Tr$. A projection $P\in\Mcal$ is called $\tau$-finite if $\tau(P)<\infty$. We note that a $\Tr$-finite projection has finite rank.
	
	Let $\mu_t(A)$ denote the $t^{\text\rm th}$ generalized $s$-number \cite[Definition 2.1]{FK} of a $\tau$-measurable \cite[Definition 1.2]{FK} operator $A$ affiliated with $\Mcal$. An operator $A\in\Mcal$ is said to be {\it $\tau$-compact} if and only if $\lim_{t\rightarrow\infty}\mu_t(A)=0$. If $(\Mcal,\tau)=(\Bcal(\Hcal),\Tr)$, then the concept of $\tau$-compactness coincides with the concept of compactness.
	
	When $H$ is a closed densely defined self-adjoint operator affiliated with $\Mcal$, we briefly write that {\it $H$ is a self-adjoint operator affiliated with $\Mcal$.}
	We say that a self-adjoint operator $H$ {\it has $\tau$-compact resolvent} if $H$ is affiliated with $\Mcal$ and $(H-zI)^{-1}$ is $\tau$-compact for some and, hence, for each resolvent point $z$ of $H$.

	The following useful property is a consequence of the second resolvent identity (see, e.g., \cite[Lemma 1.3]{ACD}).
	
	\begin{lma}
		\label{l1}
		Let $H_0$ be a self-adjoint operator with $\tau$-compact resolvent and let $V$ be a self-adjoint operator in $\Mcal$. Then, $H=H_0+V$ also has $\tau$-compact resolvent.
	\end{lma}
	
	Let $E_H(\cdot)$ denote the spectral measure of a self-adjoint operator $H$.
	The following result is a consequence of Lemma \ref{l1} and \cite[Lemma 1.4]{ACD}.
	
	\begin{lma}\label{l2}
		Let $H_0$ be a self-adjoint operator with $\tau$-compact resolvent and let $V$ be a self-adjoint operator in $\Mcal$.
		Then, for every compact subset $\Delta\subset\mathbb{R}$, the spectral projections $E_{H_0}(\Delta)$ and $E_{H_0+V}(\Delta)$ are $\tau$-finite.
	\end{lma}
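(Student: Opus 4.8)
The plan is to deduce the statement for $H_0+V$ from the corresponding statement for a single operator with $\tau$-compact resolvent. By Lemma~\ref{l1} the operator $H:=H_0+V$ again has $\tau$-compact resolvent, so it is enough to prove the following: if $H$ is \emph{any} self-adjoint operator with $\tau$-compact resolvent and $\Delta\subset\R$ is compact, then $E_H(\Delta)$ is $\tau$-finite. Applying this assertion once to $H_0$ and once to $H_0+V$ then yields the lemma. (This single-operator assertion is \cite[Lemma~1.4]{ACD}; I indicate the short argument below for completeness.)

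For the single-operator assertion, choose $R>0$ with $\Delta\subseteq[-R,R]$ and put $\delta:=(R^2+1)^{-1/2}>0$. For every $\lambda\in\R$ one has the scalar inequality $\chi_\Delta(\lambda)\le\delta^{-1}\lvert(\lambda-i)^{-1}\rvert$, since on $\Delta$ the right-hand side is at least $1$ while off $\Delta$ the left-hand side is $0$. Feeding both sides into the Borel functional calculus of the self-adjoint operator $H$ turns this into the operator inequality $E_H(\Delta)=\chi_\Delta(H)\le\delta^{-1}\lvert(H-iI)^{-1}\rvert$ between positive elements of $\Mcal$.

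It remains to translate this into a statement about the trace. By monotonicity of the generalized $s$-numbers \cite{FK} and the identity $\mu_t(\lvert T\rvert)=\mu_t(T)$, one gets $\mu_t\big(E_H(\Delta)\big)\le\delta^{-1}\,\mu_t\big((H-iI)^{-1}\big)$ for all $t>0$, and the right-hand side tends to $0$ as $t\to\infty$ because $(H-iI)^{-1}$ is $\tau$-compact. On the other hand, for a projection $P\in\Mcal$ one has $\mu_t(P)=1$ for $t<\tau(P)$ and $\mu_t(P)=0$ for $t\ge\tau(P)$; hence $\mu_t\big(E_H(\Delta)\big)\to0$ is possible only when $\tau\big(E_H(\Delta)\big)<\infty$. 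There is no serious obstacle here: the only points needing a line of care are the passage from the scalar to the operator inequality via functional calculus and the elementary fact that for a projection the map $t\mapsto\mu_t(P)$ detects $\tau$-finiteness, both of which are standard in the Fack--Kosaki calculus of generalized $s$-numbers.
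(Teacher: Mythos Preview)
Your argument is correct and follows exactly the paper's approach: the paper states that the lemma is a consequence of Lemma~\ref{l1} together with \cite[Lemma~1.4]{ACD}, and you do precisely this reduction, additionally supplying a short self-contained proof of the cited single-operator fact. The extra argument via the scalar inequality $\chi_\Delta(\lambda)\le\delta^{-1}\lvert(\lambda-i)^{-1}\rvert$ and the Fack--Kosaki description of $\mu_t(P)$ for a projection is sound.
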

	
	Let $f\in C^n(\R)$. Recall that the divided difference of order $n$ is an operation on the function $f$ of one
	(real) variable, and is defined recursively as follows:
	\begin{align*}
		&f^{[0]}(\lambda)=f(\lambda),\\
		&f^{[n]}(\lambda_0,\lambda_1,\ldots,\lambda_n)\\
		&=\begin{cases*}
			\frac{f^{[n-1]}(\lambda_0,\lambda_1,\ldots,\lambda_{n-2},\lambda_n)-f^{[n-1]}(\lambda_0,\lambda_1,\ldots,\lambda_{n-2},\lambda_{n-1})}{\lambda_n-\lambda_{n-1}} \quad \text{if}\quad \lambda_n\neq\lambda_{n-1},\\
			\frac{\partial}{\partial \lambda}f^{[n-1]}(\lambda_0,\lambda_1,\ldots,\lambda_{n-2},\lambda)\big|_{\lambda=\lambda_{n-1}}\quad \text{if}\quad \lambda_n=\lambda_{n-1}.
		\end{cases*}
	\end{align*}
	
	Let $L^p$, $1\leq p<\infty$, denote the noncommutative $L^p$-space associated with $(\mathcal{M},\tau)$, that is,
	$$L^p=\{A\;{\rm is\; affiliated\; with}\; \mathcal{M}:\;\|A\|_p:=(\tau(|A|^p))^{1/p}<\infty\}$$
	and let $\mathcal{L}^p$ denote the $\tau$-Schatten-von Neumann ideal associated with $(\Mcal,\tau)$, that is,
	\begin{align}
		\label{tausvn}
		\mathcal{L}^p=L^p\cap\M.
	\end{align}
	When $p=\infty$ we use the convention $\mathcal{L}^\infty=\Mcal$, $\|\cdot\|_\infty=\|\cdot\|$.
	\smallskip
	
	\paragraph{\bf Multilinear operator integrals.}
	Let $H$ be a self-adjoint operator affiliated with $\mathcal{M}$ and let $f\in C^{n}(\R)$ be such that $\widehat{f^{(n)}}\in L^1(\R)$. Let $p_k\in[1,\infty], 1\leq k\leq n$, and $E_{l,m}= E_H\big(\big[\frac{l}{m},\frac{l+1}{m}\big)\big)$ for $m\in \mathbb{N}$ and $l\in \mathbb{Z}$.
	Define a multilinear map on $\mathcal{L}^{p_1}\times\cdots\times\mathcal{L}^{p_n}$ by
	\begin{align}\label{a1}
		&T^{H,\ldots,H}_{f^{[n]}}(V_1,V_2,\ldots,V_n)\\
		\nonumber
		&=\lim_{m\to\infty}\lim_{N\to\infty}\sum_{|l_0|,|l_1|,\ldots,|l_n|\leq N}f^{[n]}\Big(\frac{l_0}{m},\frac{l_1}{m},\ldots,\frac{l_n}{m}\Big)E_{l_0,m}V_1E_{l_1,m}\\\nonumber
		&\hspace{2.5in}\times V_2E_{l_2,m}\cdots V_n E_{l_n,m},
	\end{align}
	where the limits are evaluated in the norm $\|\cdot\|_p$, $\frac{1}{p}=\frac{1}{p_1}+\frac{1}{p_2}+\cdots+\frac{1}{p_n}$. The existence of the limits in \eqref{a1} is justified in \cite[Lemma 3.5]{PSS}. We call $T^{H,\ldots,H}_{f^{[n]}}$ defined in \eqref{a1} a {\it multilinear operator integral with symbol $f^{[n]}$} and write $T_{f^{[n]}}$ when there is no ambiguity which element $H$ is used.
	
	Discussion of multiple operator integrals, including those with more general symbols, and their applications can be found in \cite{ST19}.
	It was shown in \cite[Lemma 3.1, Theorem 3.2]{SA} that the multilinear operator integral given by \eqref{a1} is bounded for all $f\in C^{n+1}_c(\R)$ when $H$ has compact resolvent.
	
	The following estimate is a consequence of \cite[Theorem 5.3 and Remark 5.4]{PSS} and \cite[Theorem 4.4.7]{ST19}.
	
	\begin{thm}\label{inv-thm}
		Let $k\in\mathbb{N}$ and let $\alpha,\alpha_1,\ldots,\alpha_k\in(1,\infty)$ satisfy $\tfrac{1}{\alpha_1}+\cdots+\tfrac{1}{\alpha_k}=\tfrac{1}{\alpha}$. Let $H$ and $\tilde{H}$ be two self-adjoint operators affiliated with $\mathcal{M}$. Assume that $V_\ell\in\mathcal{L}^{\alpha_\ell},\, 1\leq \ell\leq k$. Then there exists $c_{\alpha,k}>0$ such that
		\begin{align}\label{est}
			\|T^{\tilde{H},H,\ldots,H}_{f^{[k]}}(V_1,V_2,\ldots,V_k)\|_{\alpha}\leq c_{\alpha,k}\, \|f^{(k)}\|_\infty \prod\limits_{1\leq \ell \leq k}\|V_\ell\|_{\alpha_\ell}
		\end{align}
		for every $f\in C_b^k(\R).$
	\end{thm}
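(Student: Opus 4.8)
The plan is to derive the estimate from the corresponding bound for the \emph{diagonal} multiple operator integral $T^{H,\dots,H}_{f^{[k]}}$ by a $2\times2$ amplification converting the mixed-operator integral into a diagonal one, the diagonal bound itself being supplied by the cited references.

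Work in the semifinite von Neumann algebra $\mathcal{N}$ of $2\times2$ matrices over $\mathcal{M}$, equipped with its natural faithful normal semifinite trace, still denoted $\tau$; note that the norm $\|\,\cdot\,\|_p$ on $\mathcal{N}$ restricts, on a single matrix-entry slot, to $\|\,\cdot\,\|_p$ on $\mathcal{M}$. Put $\mathbf H=\tilde H\oplus H$, $\mathbf V_1=\left[\begin{smallmatrix}0&V_1\\0&0\end{smallmatrix}\right]$, and $\mathbf V_\ell=0\oplus V_\ell$ for $2\le\ell\le k$; then $\mathbf H$ is self-adjoint affiliated with $\mathcal{N}$, and $\mathbf V_\ell\in\mathcal{L}^{\alpha_\ell}(\mathcal{N})$ with $\|\mathbf V_\ell\|_{\alpha_\ell}=\|V_\ell\|_{\alpha_\ell}$. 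Since $E_{\mathbf H}(\cdot)=E_{\tilde H}(\cdot)\oplus E_H(\cdot)$ is block diagonal, a direct computation shows that for any intervals $\delta_0,\dots,\delta_k$ (such as the intervals $\delta_j=[l_j/m,(l_j+1)/m)$ occurring in \eqref{a1})
\[
E_{\mathbf H}(\delta_0)\mathbf V_1E_{\mathbf H}(\delta_1)\mathbf V_2\cdots\mathbf V_kE_{\mathbf H}(\delta_k)
=\left[\begin{smallmatrix}0&E_{\tilde H}(\delta_0)V_1E_H(\delta_1)V_2\cdots V_kE_H(\delta_k)\\0&0\end{smallmatrix}\right].
\]
Passing to the limit in \eqref{a1}, $T^{\mathbf H,\dots,\mathbf H}_{f^{[k]}}(\mathbf V_1,\dots,\mathbf V_k)$ is the matrix with $(1,2)$-entry $T^{\tilde H,H,\dots,H}_{f^{[k]}}(V_1,\dots,V_k)$ and zero elsewhere, whence $\|T^{\mathbf H,\dots,\mathbf H}_{f^{[k]}}(\mathbf V_1,\dots,\mathbf V_k)\|_\alpha=\|T^{\tilde H,H,\dots,H}_{f^{[k]}}(V_1,\dots,V_k)\|_\alpha$. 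Thus the assertion reduces to the case $\tilde H=H$.

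For the diagonal integral, the combination of \cite[Theorem 5.3 and Remark 5.4]{PSS} with \cite[Theorem 4.4.7]{ST19} provides, for $\alpha_\ell\in(1,\infty)$ with $\sum_\ell\alpha_\ell^{-1}=\alpha^{-1}$ and $f\in C_b^k(\R)$, a bounded $k$-linear extension $\mathcal{L}^{\alpha_1}\times\cdots\times\mathcal{L}^{\alpha_k}\to\mathcal{L}^\alpha$ of norm at most $c_{\alpha,k}\|f^{(k)}\|_\infty$, which agrees with the map defined by \eqref{a1} whenever $\widehat{f^{(k)}}\in L^1(\R)$ (both being the $\|\,\cdot\,\|_\alpha$-limit of the discretized sums, by \cite[Lemma 3.5]{PSS}). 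Together with the amplification above, this yields \eqref{est}. The only real bookkeeping left on our side is to verify this compatibility of definitions and the amplification identity; the genuine difficulty I expect here — boundedness of the diagonal multiple operator integral on the stated range of Schatten exponents — is entirely internal to \cite{PSS,ST19}, resting on the reduction to finite-dimensional Schur multipliers and hard estimates in noncommutative $L^p$-spaces, and I do not attempt to reprove it.
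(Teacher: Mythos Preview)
Your argument is correct. The paper does not supply its own proof here; it records the estimate as a direct consequence of \cite[Theorem~5.3 and Remark~5.4]{PSS} and \cite[Theorem~4.4.7]{ST19}, and those sources already treat multiple operator integrals with possibly distinct self-adjoint operators at each slot, so the mixed case $T^{\tilde H,H,\dots,H}_{f^{[k]}}$ is covered by the citations without further reduction (the paper's definition \eqref{a1}, written only for a single $H$, is tacitly extended by reference). Your $2\times2$ amplification is thus an alternative route rather than a necessary one: it reduces the two-operator estimate to the single-operator estimate in $M_2(\mathcal M)$, where the same constant $c_{\alpha,k}$ applies because those constants are independent of the ambient semifinite algebra. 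The device is standard and legitimate; its advantage is that it makes the passage from one operator to two explicit and confines the appeal to the references to the purely diagonal case. The one point of care is that your amplification identity is established via the approximating sums in \eqref{a1}, hence initially only for $f$ with $\widehat{f^{(k)}}\in L^1(\R)$; the extension to all $f\in C_b^k(\R)$ then has to come either by declaring the amplification to \emph{be} the definition of the mixed integral or by invoking the bounded extension furnished by the cited sources --- you flag this as bookkeeping, which is fair.
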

	
	We also have the following bound for the seminorm $|\tau(\cdot)|$ of a multilinear operator integral.
	
	\begin{thm}\label{est-thm}
		Let $k\in\mathbb{N}$ and let $\alpha_1,\ldots,\alpha_k\in(1,\infty)$ satisfy $\tfrac{1}{\alpha_1}+\cdots+\tfrac{1}{\alpha_k}=1$. Let $H$ be a self-adjoint operator affiliated with $\mathcal{M}$. Assume that $V_\ell\in\mathcal{L}^{\alpha_\ell},\, 1\leq \ell\leq k$. Then, for $c_{1,k}>0$ satisfying \eqref{est},
		\begin{align}\label{ss}
			|\tau(T^{H,H,\ldots,H}_{f^{[k]}}(V_1,\ldots,V_k))|&\leq c_{1,k}\|f^{(k)}\|_\infty\prod\limits_{1\leq \ell \leq k}\|V_\ell\|_{\alpha_\ell}
		\end{align}
		for every $f$ with $f^{(k)}\in C_0(\R)$ satisfying $\widehat{f^{(k)}}\in L^1(\R)$.
	\end{thm}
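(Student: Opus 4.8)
The plan is to reduce this endpoint ($\alpha=1$) statement to the non‑endpoint estimate \eqref{est} by using the cyclicity of $\tau$ to trade one factor of the multilinear operator integral for an identity operator — which costs nothing in operator norm — while peeling off a single Hölder factor. The key step is a trace identity. Since $\widehat{f^{(k)}}\in L^1(\R)$, the operator $T^{H,\ldots,H}_{f^{[k]}}(V_1,\ldots,V_k)$ is given by \eqref{a1}, and, because $\tfrac1{\alpha_1}+\cdots+\tfrac1{\alpha_k}=1$, the iterated limits there converge in $\|\cdot\|_1$ by \cite[Lemma 3.5]{PSS}; likewise $T^{H,\ldots,H}_{f^{[k]}}(I,V_1,\ldots,V_{k-1})$ is given by \eqref{a1} with the limits converging in $\|\cdot\|_{\alpha_k'}$, where $\alpha_k':=\big(1-\tfrac1{\alpha_k}\big)^{-1}\in(1,\infty)$. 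Working with the finite truncations of \eqref{a1}, using the cyclicity of $\tau$ together with $E_{l,m}E_{l',m}=\delta_{l,l'}E_{l,m}$ (so that $E_{l_k,m}E_{l_0,m}$ forces $l_k=l_0$ on the left, while $E_{l_0,m}I E_{l_1,m}$ forces $l_1=l_0$ on the right), and using the permutation symmetry of divided differences (whence $f^{[k]}(\tfrac{l_0}{m},\tfrac{l_1}{m},\ldots,\tfrac{l_{k-1}}{m},\tfrac{l_0}{m})=f^{[k]}(\tfrac{l_0}{m},\tfrac{l_0}{m},\tfrac{l_1}{m},\ldots,\tfrac{l_{k-1}}{m})$), one checks that, after relabeling the summation indices, the truncations of $\tau\big(T^{H,\ldots,H}_{f^{[k]}}(V_1,\ldots,V_k)\big)$ and of $\tau\big(V_k\,T^{H,\ldots,H}_{f^{[k]}}(I,V_1,\ldots,V_{k-1})\big)$ coincide. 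Passing to the limit — legitimate because $|\tau(\cdot)|\le\|\cdot\|_1$ and $X\mapsto\tau(V_kX)$ is a bounded functional on $\mathcal L^{\alpha_k'}$ — yields
\begin{align*}
	\tau\big(T^{H,\ldots,H}_{f^{[k]}}(V_1,\ldots,V_k)\big)=\tau\big(V_k\,T^{H,\ldots,H}_{f^{[k]}}(I,V_1,\ldots,V_{k-1})\big).
\end{align*}

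Next, since $\tfrac1{\alpha_1}+\cdots+\tfrac1{\alpha_{k-1}}=\tfrac1{\alpha_k'}$ with $\alpha_k'\in(1,\infty)$, and $I\in\mathcal L^\infty=\Mcal$ with $\|I\|_\infty=1$, Hölder's inequality for $\tau$ gives
\begin{align*}
	\big|\tau\big(T^{H,\ldots,H}_{f^{[k]}}(V_1,\ldots,V_k)\big)\big|\le\|V_k\|_{\alpha_k}\,\big\|T^{H,\ldots,H}_{f^{[k]}}(I,V_1,\ldots,V_{k-1})\big\|_{\alpha_k'}.
\end{align*}
I would then apply \eqref{est} of Theorem \ref{inv-thm}, in the form in which one Schatten exponent is allowed to equal $\infty$ (which is part of \cite[Theorem 5.3 and Remark 5.4]{PSS} and \cite[Theorem 4.4.7]{ST19}), to the $k$‑linear operator integral $T^{H,\ldots,H}_{f^{[k]}}(I,V_1,\ldots,V_{k-1})$ with output exponent $\alpha_k'$ and input exponents $\infty,\alpha_1,\ldots,\alpha_{k-1}$; this bounds $\big\|T^{H,\ldots,H}_{f^{[k]}}(I,V_1,\ldots,V_{k-1})\big\|_{\alpha_k'}$ by $c_{1,k}\,\|f^{(k)}\|_\infty\,\|I\|_\infty\prod_{\ell=1}^{k-1}\|V_\ell\|_{\alpha_\ell}$, where $c_{1,k}$ is the constant furnished by \eqref{est}. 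Combining the last three displays gives \eqref{ss}. (Note $k\ge2$, since $\tfrac1{\alpha_1}=1$ is incompatible with $\alpha_1\in(1,\infty)$; when $V_1=\cdots=V_k$ the identity can alternatively be obtained by differentiating $t\mapsto\tau\big(f(H+tV)\big)$, but the cyclicity argument is what delivers the full product bound for distinct $V_\ell$.)

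The delicate point is the trace identity: one must interchange $\tau$ with both limits in \eqref{a1}, and it is precisely the hypothesis $\widehat{f^{(k)}}\in L^1(\R)$ that makes this permissible, by securing $\|\cdot\|_1$‑ (respectively $\|\cdot\|_{\alpha_k'}$‑)convergence of the defining sums so that the bounded functionals $\tau(\cdot)$ and $\tau(V_k\,\cdot)$ may be moved inside. A secondary issue is the use of \eqref{est} with one $\mathcal L^\infty$ slot; since the symbol involved is still a genuine divided difference $f^{[k]}$, this is covered by the cited references even though Theorem \ref{inv-thm} was stated above only for finite exponents.
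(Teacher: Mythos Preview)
Your proof is correct and follows essentially the same route as the paper: use cyclicity of $\tau$ on the truncations in \eqref{a1} to collapse two of the $k+1$ spectral variables, then peel off $V_k$ via H\"older and apply the non-endpoint bound \eqref{est} (together with \cite[Remark~5.4]{PSS}) to the resulting $(k-1)$-linear object. The only cosmetic difference is that the paper records the collapsed object as $T_{\tilde f^{[k]}}(V_1,\ldots,V_{k-1})$ with $\tilde f^{[k]}(\lambda_0,\ldots,\lambda_{k-1})=f^{[k]}(\lambda_0,\ldots,\lambda_{k-1},\lambda_{k-1})$, whereas you write it as $T_{f^{[k]}}(I,V_1,\ldots,V_{k-1})$; since $E_{l_0,m}IE_{l_1,m}=\delta_{l_0,l_1}E_{l_0,m}$, these are the same operator up to the symmetry of divided differences, so your ``$\mathcal L^\infty$ slot'' concern and the paper's appeal to \cite[Remark~5.4]{PSS} are two phrasings of the same step.
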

	
	\begin{proof}
		By the definition of the multiple operator integral \eqref{a1} and cyclicity of the trace,
		\begin{align}\label{sss}
			\tau\big(T^{H,H,\ldots,H}_{f^{[k]}}(V_1,\ldots,V_k)\big)
			=\tau\big(T^{H,H,\ldots,H}_{{\tilde{f}}^{[k]}}(V_1,\ldots,V_{k-1})V_k\big),
		\end{align}
		where ${\tilde{f}}^{[k]}(\lambda_0,\lambda_1,\ldots,\lambda_{k-1})=f^{[k]}(\lambda_0,\lambda_1,\ldots,\lambda_{k-1},\lambda_{k-1})$. Therefore, by\\ H\"{o}lder's inequality, Theorem \ref{inv-thm}, and \cite[Remark 5.4]{PSS} applied to \eqref{sss}, we obtain \eqref{ss}.
	\end{proof}
	
	Let $a,b\in \R$, $a<b$, and $\epsilon>0$. Let $a_\epsilon=a-\epsilon,\, b_\epsilon=b+\epsilon$. Define the function $\Phi_\epsilon$ on $\R$ by
	\begin{align}\label{indicator}
		\Phi_\epsilon(x)=(h_1(x)- h_2(x))^4,
	\end{align} where
	\begin{align*}
		&h_1(x)=\dfrac{\int_{a_\epsilon}^{x}\Phi(t-a_\epsilon)\Phi(a-t) \,dt}{\int_{a_\epsilon}^{a}\Phi(t-a_\epsilon)\Phi(a-t) \,dt},\quad	h_2(x)=\dfrac{\int_{b}^{x}\Phi(t-b)\Phi(b_\epsilon-t) \,dt}{\int_{b}^{b_\epsilon}\Phi(t-b)\Phi(b_\epsilon-t) \,dt},\\
		& \Phi(x)=\begin{cases}
			e^{-\frac{1}{x}} & \text{ if } x>0,\\
			0 & \text{ if } x\leq 0.
		\end{cases}
	\end{align*}
	Note that $\Phi_\epsilon\lvert_{(a,b)}=1,\,\Phi_\epsilon^{1/4}\in C_c^\infty((a_\epsilon,b_\epsilon))$ and $\|\Phi_\epsilon\|_\infty=1$. Moreover, if $H$ has $\tau$-compact resolvent, then by the spectral theorem, $\Phi_\epsilon(H)\in\mathcal{{L}}^1$ and $\|\Phi_\epsilon(H)\|_1\leq \tau(E_H(a_\epsilon,b_\epsilon))\|\Phi_\epsilon\|_\infty$ (see \cite[Proposition 2.3]{SkAnJOT}).
	
	\begin{thm}
		Let $H_0$ be a self-adjoint operator with $\tau$-compact resolvent, $k\in\mathbb{N}$, and $V_1,V_2,\ldots,V_k\in\Mcal$. Let $a,b\in\R$, $a<b$, and $\epsilon>0$. Then
		\begin{align}\label{a55}
			\big|\tau\big(T^{H_0,\ldots,H_0}_{f^{[k]}}(V_1,V_2,\ldots,V_k)\big)\big|\leq C_{a,b,k,\epsilon,H_0}~\|f^{(k)}\|_\infty\prod_{\ell=1}^k \|V_\ell\|
		\end{align} for every $f\in F_c^{k+1}((a,b))$, where
		\begin{align}
			\label{Clabel}
			C_{a,b,k,\epsilon,H_0}=\left((k+1)2^k+ c_{2,k} \right)(b-a+1)^k\,d_{k,\epsilon,H_0}\,\big(1+\tau(E_{H_0}(a,b))\big),
		\end{align}
		where $c_{2,k}$ satisfies \eqref{est} and
		\begin{align*}
			&d_{k,\epsilon,H_0}=\max\Big\{\|\Phi_\epsilon(H_0)\|_1,\,\max\limits_{1\leq \ell \leq k}\frac{1}{\ell!}\big\|\,\widehat{\Phi_\epsilon^{(\ell)}}\,\big\|_{1}~\Big\},
		\end{align*}
		where $\Phi_\epsilon$ is given by \eqref{indicator}.
	\end{thm}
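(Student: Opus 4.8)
The plan is to write $T^{H_0,\ldots,H_0}_{f^{[k]}}(V_1,\ldots,V_k)$ as a \emph{principal} part, to which the Schatten estimate of Theorem~\ref{inv-thm} applies after inserting spectral cut--offs of $H_0$, plus finitely many \emph{lower--order} parts, each carrying a genuine $\mathcal L^1$ factor. First note that $\widehat{f^{(k)}}\in L^1(\R)$ for every $f\in F_c^{k+1}((a,b))$: since $f^{(k)}$ is absolutely continuous with compactly supported $f^{(k+1)}\in L^2$, the identity $\widehat{f^{(k)}}(\xi)=\widehat{f^{(k+1)}}(\xi)/(i\xi)$ shows $\widehat{f^{(k)}}$ is bounded near $0$ and integrable at infinity. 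Hence $T^{H_0,\ldots,H_0}_{f^{[k]}}$ is defined by \eqref{a1}, and, since $F_c^{k+1}((a,b))\subset C_c^{k}(\R)\subset C_b^{k}(\R)$, Theorems~\ref{inv-thm} and \ref{est-thm} are available. Two elementary facts are used throughout: for every $s>0$ one has $\Phi_\epsilon^{s}\equiv 1$ on $[a,b]\supseteq\operatorname{supp}f$, so $f=f\Phi_\epsilon^{s}=\Phi_\epsilon^{s}f$, while $\Phi_\epsilon^{s}(H_0)\in\mathcal L^{1/s}$ with $\|\Phi_\epsilon^{s}(H_0)\|_{1/s}=\|\Phi_\epsilon(H_0)\|_1^{s}$ for $0<s\le 1$; and $f^{[k]}(\lambda_0,\ldots,\lambda_k)=0$ unless some $\lambda_j\in\operatorname{supp}f$.

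For the principal part, fix nonnegative $s_0,\ldots,s_k$ with $\sum_i s_i=1$, allocated so that half of the total is carried by the outer spectral factors and the other half is distributed around $V_1,\ldots,V_k$, and split $f^{[k]}=f^{[k]}\prod_{i=0}^{k}\Phi_\epsilon^{s_i}(\lambda_i)+f^{[k]}\bigl(1-\prod_{i=0}^{k}\Phi_\epsilon^{s_i}(\lambda_i)\bigr)$. Because $\Phi_\epsilon^{s}(H_0)$ commutes with the spectral projections of $H_0$, the multilinear operator integral of the first term equals $\Phi_\epsilon^{s_0}(H_0)\,T^{H_0,\ldots,H_0}_{f^{[k]}}(W_1,\ldots,W_k)\,\Phi_\epsilon^{s_k}(H_0)$, where each $W_\ell$ is $V_\ell$ flanked by powers of $\Phi_\epsilon^{1/2}(H_0)$ (with $\lambda_0,\lambda_k$ feeding the outer factors); then $W_\ell\in\mathcal L^{\alpha_\ell}$ with $\sum_\ell\alpha_\ell^{-1}=\tfrac12$ and $\prod_\ell\|W_\ell\|_{\alpha_\ell}\le\|\Phi_\epsilon(H_0)\|_1^{1/2}\prod_\ell\|V_\ell\|$. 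Using cyclicity of $\tau$ (to combine the outer factors into $\Phi_\epsilon^{1/2}(H_0)\in\mathcal L^{2}$), Hölder's inequality, and Theorem~\ref{inv-thm} with $\alpha=2$, the trace of this term is bounded in absolute value by
\[
\|\Phi_\epsilon^{1/2}(H_0)\|_2\,\bigl\|T^{H_0,\ldots,H_0}_{f^{[k]}}(W_1,\ldots,W_k)\bigr\|_2\le c_{2,k}\,\|f^{(k)}\|_\infty\,\|\Phi_\epsilon(H_0)\|_1\prod_{\ell=1}^{k}\|V_\ell\|.
\]

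For the lower--order part, expand $1-\prod_i\Phi_\epsilon^{s_i}(\lambda_i)=\sum_{i=0}^{k}\bigl(\prod_{m<i}\Phi_\epsilon^{s_m}(\lambda_m)\bigr)\bigl(1-\Phi_\epsilon^{s_i}(\lambda_i)\bigr)$. Each summand carries a factor $1-\Phi_\epsilon^{s_i}(\lambda_i)$ vanishing on $[a,b]\supseteq\operatorname{supp}f$; combining this with the localization of $f^{[k]}$ and the Leibniz rule $f=\Phi_\epsilon f$ for divided differences, one rewrites the summand, after a cyclic permutation under $\tau$, as a finite sum (at most $(k+1)2^{k}$ terms in all) of expressions $\tau\bigl(f(H_0)\,T^{H_0,\ldots,H_0}_{\Phi_\epsilon^{[\ell]}}(\cdots)\,B\bigr)$ with $\ell\in\{1,\ldots,k\}$ and $B\in\Mcal$ a product of the remaining $V_\ell$'s, spectral projections of $H_0$, and factors $I-\Phi_\epsilon^{s}(H_0)$, so that $\|B\|\le\prod\|V_\ell\|$. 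Since $\|f(H_0)\|_1\le\|f\|_\infty\,\tau(E_{H_0}(a,b))\le(b-a)^{k}\,\|f^{(k)}\|_\infty\,\tau(E_{H_0}(a,b))$ by Lemma~\ref{l3}, and $\|T^{H_0,\ldots,H_0}_{\Phi_\epsilon^{[\ell]}}(\cdots)\|\le\tfrac1{\ell!}\|\widehat{\Phi_\epsilon^{(\ell)}}\|_1\prod\|V_\ell\|\le d_{k,\epsilon,H_0}\prod\|V_\ell\|$, each such expression is at most $(b-a)^{k}\,d_{k,\epsilon,H_0}\,\tau(E_{H_0}(a,b))\,\|f^{(k)}\|_\infty\prod\|V_\ell\|$. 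Adding the principal estimate to the (at most $(k+1)2^{k}$) lower--order estimates and crudely overestimating $(b-a)^{k}\le(b-a+1)^{k}$, $\|\Phi_\epsilon(H_0)\|_1\le d_{k,\epsilon,H_0}$, $1\le 1+\tau(E_{H_0}(a,b))$ yields \eqref{a55} with $C_{a,b,k,\epsilon,H_0}$ as in \eqref{Clabel}.

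The main obstacle is the combinatorial bookkeeping in the lower--order part: one must verify that every telescoped term genuinely factors through a $\tau$-finite spectral projection of $H_0$ (so the traces are finite and the cyclic rearrangements are legitimate), that the number of resulting terms is indeed controlled by $(k+1)2^{k}$, and that all the Schatten exponents $\alpha_\ell$ created by the $\Phi_\epsilon^{s}(H_0)$--insertions lie in the admissible range $(1,\infty)$ of Theorems~\ref{inv-thm}--\ref{est-thm}, the case $k=1$ being handled separately. The operator--integral identities for the non--divided--difference symbols $f^{[k]}(\lambda_0,\ldots,\lambda_k)\prod_i\Phi_\epsilon^{s_i}(\lambda_i)$ also require justification, which I would obtain from the defining limit \eqref{a1}, the convergence results of \cite{PSS}, and the commutation of $\Phi_\epsilon^{s}(H_0)$ with the spectral resolution of $H_0$.
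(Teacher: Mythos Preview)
The paper's own ``proof'' is a one-line reference to \cite[Theorem~3.1]{SkAnJOT}, so there is no in-paper argument to compare against; your sketch is in fact more detailed than what the authors give. Your overall architecture---insert the smooth cutoff $\Phi_\epsilon$, split into a principal piece handled by Theorem~\ref{inv-thm} with $\alpha=2$ (producing $c_{2,k}\|\Phi_\epsilon(H_0)\|_1$), and lower-order pieces controlled by $\|\widehat{\Phi_\epsilon^{(\ell)}}\|_1$ together with $\|f(H_0)\|_1\le\|f\|_\infty\,\tau(E_{H_0}(a,b))$---is exactly the mechanism behind the constant~\eqref{Clabel} and is the approach of \cite{SkAnJOT}. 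Your principal-part estimate is correct as written.

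The lower-order part, however, is not yet a proof, and you rightly flag this yourself. Your telescope produces $k+1$ summands, each containing a factor $(I-\Phi_\epsilon^{s_i}(H_0))$ that lies in no Schatten ideal, so the summand is a priori only in $\Mcal$ and its trace is not even defined without further work. You assert that ``combining the localization of $f^{[k]}$ with the Leibniz rule $f=\Phi_\epsilon f$'' collapses each summand to expressions of the form $\tau\bigl(f(H_0)\,T_{\Phi_\epsilon^{[\ell]}}(\cdots)\,B\bigr)$, but the Leibniz expansion of $(f\Phi_\epsilon)^{[k]}$ produces terms $f^{[j]}(\lambda_0,\ldots,\lambda_j)\,\Phi_\epsilon^{[k-j]}(\lambda_j,\ldots,\lambda_k)$ for \emph{every} $j=0,\ldots,k$, not just $j=0$; getting down to $f^{[0]}=f$ and a bounded factor $B$ requires either iteration of the Leibniz rule or a different combinatorial identity, and it is precisely this step that must generate the count $(k+1)2^k$. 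As stated, neither the reduction to $f(H_0)$ nor the term count is justified, and the argument cannot be completed from your outline without consulting \cite{SkAnJOT} (or reproducing its combinatorics). Until that bookkeeping is made explicit---including the verification that each intermediate trace is finite so cyclicity is legitimate---the lower-order half remains a plan rather than a proof.
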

	\begin{proof}
		The proof follows along the lines of the proof of \cite[Theorem 3.1]{SkAnJOT}.
	\end{proof}
	\begin{rmrk}
		Note that the upper bound for $	\big|\tau\big(T^{H_0,\ldots,H_0}_{f^{[k]}}(V_1,V_2,\ldots,V_k) \big)\big|$ in \eqref{a55} is stronger than the upper bound stated in \cite[Theorem 3.1]{SkAnJOT}.
	\end{rmrk}
	
	The following result is a consequence of \cite[Theorems 4.4.6 and 5.3.5]{ST19}.
	
	\begin{thm}\label{th1}
		Let $n\in\N$ and let $f\in C^n(\R)$ be such that $f^{(k)},\,\widehat{f^{(k)}}\in L^1(\R)$, $k=0,1,\dots,n$. Let $H$ be a self-adjoint operator affiliated with $\mathcal{M}$ and $V$ a self-adjoint operator in $\mathcal{M}$. Then, the G\^{a}teaux derivative $\frac{d^k}{dt^k}f(H+tV)|_{t=0}$ exists in the uniform operator topology and admits the multiple operator integral representation
		\begin{align}
			\label{a4}
			\frac{1}{k!}\frac{d^k}{ds^k}f(H+sV)\big|_{s=t}=T_{f^{[k]}}^{H+tV,\dots,H+tV}(V,V,\dots,V),\quad 1\leq k\leq n.
		\end{align}
		Moreover, if $V\in\mathcal{L}^{k}$, then the above $k$-th derivative is an element in $\mathcal{L}^{1}$.
	\end{thm}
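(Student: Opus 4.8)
The plan is to derive Theorem~\ref{th1} from the general differentiation and Schatten-class theory of multiple operator integrals, namely from \cite[Theorem 4.4.6]{ST19} (existence of the G\^ateaux derivative and its multiple operator integral representation) together with \cite[Theorem 5.3.5]{ST19} (the $\mathcal{L}^1$-membership), after checking that the hypothesis $f^{(k)},\widehat{f^{(k)}}\in L^1(\R)$ for $0\le k\le n$ places $f$ in the symbol class to which those results apply. The first step is a reduction to the base point $t=0$: fixing $t\in\R$ and putting $\tilde{H}=H+tV$, the operator $\tilde{H}$ is again a self-adjoint operator affiliated with $\Mcal$ with the same domain as $H$ (because $V\in\Mcal$ is bounded and self-adjoint), $V$ is still a self-adjoint operator in $\Mcal$, and
\begin{align*}
\frac{d^k}{ds^k}f(H+sV)\big|_{s=t}=\frac{d^k}{dr^k}f(\tilde{H}+rV)\big|_{r=0};
\end{align*}
hence it suffices to prove \eqref{a4} at $t=0$ with $H$ replaced by $\tilde{H}$.

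Since $f,\widehat f\in L^1(\R)$, Fourier inversion gives $f(x)=(2\pi)^{-1/2}\int_\R\widehat f(\xi)\,e^{ix\xi}\,d\xi$, and the spectral theorem together with Bochner integration in $\Mcal$ yields $f(H+sV)=(2\pi)^{-1/2}\int_\R\widehat f(\xi)\,e^{i(H+sV)\xi}\,d\xi$. The map $s\mapsto e^{i(H+sV)\xi}$ is smooth in the operator norm, and iterating Duhamel's formula gives, with $\Delta_k$ the standard $k$-simplex,
\begin{align*}
\frac{1}{k!}\frac{d^k}{ds^k}e^{i(H+sV)\xi}\big|_{s=0}=(i\xi)^k\int_{\Delta_k}e^{is_0\xi H}Ve^{is_1\xi H}V\cdots Ve^{is_k\xi H}\,ds,
\end{align*}
whose operator norm is $O(|\xi|^k)$ uniformly in the remaining variables. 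Because $\widehat{f^{(k)}}(\xi)=(i\xi)^k\widehat f(\xi)$, the assumption $\widehat{f^{(k)}}\in L^1(\R)$ means $\int_\R|\xi|^k|\widehat f(\xi)|\,d\xi<\infty$, which legitimizes differentiating $k$ times under the integral sign in the operator norm. This establishes norm-differentiability and the identity
\begin{align*}
\frac{1}{k!}\frac{d^k}{ds^k}f(H+sV)\big|_{s=0}=\frac{1}{\sqrt{2\pi}}\int_\R\widehat f(\xi)\,(i\xi)^k\int_{\Delta_k}e^{is_0\xi H}Ve^{is_1\xi H}V\cdots Ve^{is_k\xi H}\,ds\,d\xi.
\end{align*}
Inserting the spectral resolution of $H$ and invoking the Fourier representation of the divided difference,
\begin{align*}
f^{[k]}(\lambda_0,\ldots,\lambda_k)=\frac{1}{\sqrt{2\pi}}\int_\R\widehat f(\xi)\,(i\xi)^k\int_{\Delta_k}e^{i\xi(s_0\lambda_0+\cdots+s_k\lambda_k)}\,ds\,d\xi,
\end{align*}
one recognizes the right-hand side as $T^{H,\ldots,H}_{f^{[k]}}(V,\ldots,V)$ of \eqref{a1}; this identification for symbols coming from $f$ with $\widehat{f^{(k)}}\in L^1(\R)$ is precisely \cite[Theorem 4.4.6]{ST19}. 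Combined with the first step, this proves \eqref{a4} for all $t$.

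Finally, for the ``moreover'' statement, suppose $V\in\mathcal{L}^{k}$. Applying \cite[Theorem 5.3.5]{ST19} with the exponents $\alpha_1=\cdots=\alpha_k=k$ (so that $\tfrac1{\alpha_1}+\cdots+\tfrac1{\alpha_k}=1$), and using once more that the symbol $f^{[k]}$ arises from a function with $\widehat{f^{(k)}}\in L^1(\R)$, the multiple operator integral $T^{H+tV,\ldots,H+tV}_{f^{[k]}}$ is bounded from $\mathcal{L}^{k}\times\cdots\times\mathcal{L}^{k}$ into $\mathcal{L}^{1}$; hence $\frac{1}{k!}\frac{d^k}{ds^k}f(H+sV)\big|_{s=t}=T^{H+tV,\ldots,H+tV}_{f^{[k]}}(V,\ldots,V)\in\mathcal{L}^1$. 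I expect the only genuinely delicate point to be this endpoint case: Theorem~\ref{inv-thm} above (taken from \cite{PSS}) requires $\alpha\in(1,\infty)$ and therefore cannot by itself reach $\mathcal{L}^1$, so one must rely on \cite[Theorem 5.3.5]{ST19}, whose proof exploits the Fourier-integral (projective tensor) structure of $f^{[k]}$ to cover the endpoint. Everything else is routine once the interchange of $\frac{d}{ds}$ with $\int_\R\,d\xi$ in the second paragraph is justified via the $L^1$ bound on $\widehat{f^{(k)}}$.
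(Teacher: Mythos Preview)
Your proposal is correct and follows the same route as the paper: the paper simply records that Theorem~\ref{th1} is a consequence of \cite[Theorems 4.4.6 and 5.3.5]{ST19}, and you invoke exactly those two results, additionally sketching the Fourier/Duhamel argument underlying \cite[Theorem 4.4.6]{ST19} and correctly isolating the endpoint $\mathcal{L}^1$-issue as the reason \cite[Theorem 5.3.5]{ST19} (rather than Theorem~\ref{inv-thm}) is needed for the ``moreover'' clause.
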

	
	\section{ Trace formulas for operators with $\tau$-compact resolvents}
	
	In this section we establish our first main result.
	
	\begin{thm}\label{mainthm}
		Let $H_0$ be a self-adjoint operator with $\tau$-compact resolvent, $V$ a self-adjoint operator in $\Mcal$, $n\in \mathbb{N}$, and let $-\infty<a<b<\infty$, $\epsilon>0$. Then,
		\begin{align}\label{a6}
			\left|\tau\big(\mathcal{R}_{H_0,f,n}(V)\big)\right|\leq D_{a,b,n,\epsilon,H_0,V} \|f^{(n)}\|_\infty
		\end{align}
		for every $f\in D_c^n((a,b))$ with $f^{(n)}\in B([a,b])$, where
		\begin{align}\label{const}
			&D_{a,b,n,\epsilon,H_0,V}\\
			\nonumber
			&=(b-a)^{n}\max\big\{\tau(E_{H_0}([a,b])),\tau(E_{H_0+V}([a,b]))\big\}\\
			\nonumber&\hspace{2in}+\sum_{k=1}^{n-1} (b-a)^{n-k}C_{a,b,k,\epsilon,H_0}\|V\|^k,
		\end{align}
		and $C_{a,b,k,\epsilon,H_0}$ satisfies \eqref{a55}. Furthermore, there exists a real-valued function $\eta_{a,b,n,\epsilon,H_0,V}\in L^1((a,b))$ such that
		\begin{align}
			\label{aa7}
			\tau\big(\mathcal{R}_{H_0,f,n}(V)\big)=\int_{a}^{b}f^{(n)}(\lambda)\eta_{a,b,n,\epsilon,H_0,V}(\lambda)d\lambda
		\end{align}
		for every $f\in F_c^n((a,b))$ and
		\begin{align}
			\label{etabound}
			\int_a^b\big|\eta_{a,b,n,\epsilon,H_0,V}(\lambda)\big|d\lambda\leq D_{a,b,n,\epsilon,H_0,V}.
		\end{align}
		The function $\eta_{a,b,n,\epsilon,H_0,V}\in L^1((a,b))$ is determined by \eqref{aa7} uniquely up to a polynomial summand of degree at most $n-1$.
	\end{thm}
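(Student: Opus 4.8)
The plan is to prove \eqref{a6} and \eqref{aa7}--\eqref{etabound} first for smooth symbols $f\in C_c^\infty((a,b))$ and then to descend to the rougher classes $D_c^n((a,b))$ and $F_c^n((a,b))$ by mollification, the passage being powered by the robustness of \eqref{a55}. For $f\in C_c^\infty((a,b))$ one has $\widehat{f^{(k)}}\in L^1(\R)$ for all $k$, so Theorem~\ref{th1} identifies each Taylor term in \eqref{a5} with a multiple operator integral, $\tfrac1{k!}\tfrac{d^k}{ds^k}f(H_0+sV)\big|_{s=0}=T_{f^{[k]}}^{H_0,\dots,H_0}(V,\dots,V)$, so that
\[
\tau\big(\mathcal{R}_{H_0,f,n}(V)\big)=\tau\big(f(H_0+V)\big)-\tau\big(f(H_0)\big)-\sum_{k=1}^{n-1}\tau\big(T_{f^{[k]}}^{H_0,\dots,H_0}(V,\dots,V)\big).
\]
(Iterating the perturbation formula for multiple operator integrals collapses the right-hand side to $\tau\big(T_{f^{[n]}}^{H_0+V,H_0,\dots,H_0}(V,\dots,V)\big)$, which is why both $E_{H_0}$ and $E_{H_0+V}$ enter \eqref{const}.)

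For the estimate in the smooth case, since $\operatorname{supp}f\subset(a,b)$ the operators $f(H_0)$ and $f(H_0+V)$ have spectral support in $[a,b]$, where the projections $E_{H_0}([a,b]),E_{H_0+V}([a,b])$ are $\tau$-finite by Lemmas~\ref{l1} and~\ref{l2}; hence $f(H_0),f(H_0+V)\in\mathcal{L}^1$ and $|\tau(f(H_0+V))-\tau(f(H_0))|$ is at most $\|f\|_\infty$ times the total $\tau$-dimension of the two spectral subspaces, which I would collect into the factor $\max\{\tau(E_{H_0}([a,b])),\tau(E_{H_0+V}([a,b]))\}$ appearing in \eqref{const}. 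The remaining $n-1$ terms are controlled by \eqref{a55} (applicable since $C_c^\infty((a,b))\subset F_c^{k+1}((a,b))$): $|\tau(T_{f^{[k]}}^{H_0,\dots,H_0}(V,\dots,V))|\le C_{a,b,k,\epsilon,H_0}\|f^{(k)}\|_\infty\|V\|^k$. Replacing each $\|f^{(k)}\|_\infty$ by $(b-a)^{n-k}\|f^{(n)}\|_\infty$ via Lemma~\ref{l3} and summing yields \eqref{a6} with constant \eqref{const}. To reach $f\in D_c^n((a,b))$ with $f^{(n)}\in B([a,b])$, mollify $f_\delta=f*\phi_\delta\in C_c^\infty((a,b))$ with $\operatorname{supp}f_\delta\subset(a,b)$; then $\|f_\delta^{(n)}\|_\infty\le\|f^{(n)}\|_\infty$ and $f_\delta^{(k)}\to f^{(k)}$ uniformly for $0\le k\le n-1$. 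Since $D_c^n((a,b))\cap\{f^{(n)}\in B([a,b])\}\subset F_c^n((a,b))\subset F_c^{k+1}((a,b))$, applying \eqref{a55} to $f_\delta-f$ shows $\tau(T_{f_\delta^{[k]}}^{H_0,\dots,H_0}(V,\dots,V))$ is Cauchy and hence convergent, while uniform convergence and $\tau$-finiteness give $\tau(f_\delta(H_0\pm V))\to\tau(f(H_0\pm V))$; thus $\tau(\mathcal{R}_{H_0,f_\delta,n}(V))\to\tau(\mathcal{R}_{H_0,f,n}(V))$ and \eqref{a6} passes to the limit.

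For the trace formula I would regard $\Lambda(f):=\tau(\mathcal{R}_{H_0,f,n}(V))$ as a functional on $C_c^\infty((a,b))$; by \eqref{a6}, $|\Lambda(f)|\le D_{a,b,n,\epsilon,H_0,V}\|f^{(n)}\|_\infty$. Since a compactly-supported-in-$(a,b)$ function is recovered from its $n$-th derivative, $\Lambda$ factors through $f\mapsto f^{(n)}$ into a functional on $\{f^{(n)}:f\in C_c^\infty((a,b))\}=\{g\in C_c^\infty((a,b)):\int_a^b x^j g(x)\,dx=0,\ 0\le j\le n-1\}$ that is bounded by $D_{a,b,n,\epsilon,H_0,V}\|\cdot\|_\infty$. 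Extending it by Hahn--Banach to $C_0((a,b))$ and representing via the Riesz theorem gives a finite signed measure of total variation $\le D_{a,b,n,\epsilon,H_0,V}$; I then argue it is absolutely continuous, with density $\eta_{a,b,n,\epsilon,H_0,V}\in L^1((a,b))$ satisfying \eqref{etabound}, which yields \eqref{aa7} for $f\in C_c^\infty((a,b))$. (For smooth $f$ the density may alternatively be obtained, and endowed with extra integrability, via the higher-order spectral shift function as in \cite{SkAnJOT}.) Finally \eqref{aa7} is extended to $f\in F_c^n((a,b))$ by mollification: $f_\delta=f*\phi_\delta\in C_c^\infty((a,b))$ has $f_\delta^{(k)}\to f^{(k)}$ uniformly for $k\le n-1$ and $f_\delta^{(n)}\to f^{(n)}$ in $L^2((a,b))$, so the left-hand side converges as above, and the right-hand side converges because the construction supplies $\eta$ with enough integrability (essential boundedness, or $L^2$ membership on the bounded interval) to pair continuously with the $L^2$-convergent $f_\delta^{(n)}$.

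Uniqueness is elementary: if $\eta_1,\eta_2\in L^1((a,b))$ both satisfy \eqref{aa7}, then $\int_a^b f^{(n)}(\eta_1-\eta_2)\,d\lambda=0$ for every $f\in C_c^\infty((a,b))$, i.e.\ the distribution $\eta_1-\eta_2$ annihilates $\{g\in C_c^\infty((a,b)):\int x^j g=0,\ 0\le j\le n-1\}$, whence $\eta_1-\eta_2$ agrees a.e.\ with a polynomial of degree at most $n-1$; conversely, adding such a polynomial to $\eta$ does not affect \eqref{aa7}, by $n$-fold integration by parts. I expect the main obstacle to be making the mollification argument work on $D_c^n$ and $F_c^n$ --- exactly the point where the hypotheses of \cite{SkAnJOT} are relaxed --- which rests on \eqref{a55} being valid on all of $F_c^{k+1}((a,b))$, so that the traces of the relevant multiple operator integrals depend continuously on the symbol; the secondary technical point is producing $\eta$ as a genuine $L^1$-function with norm controlled by $D_{a,b,n,\epsilon,H_0,V}$.
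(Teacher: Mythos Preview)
Your outline has the right shape but contains a genuine gap at the step you yourself flag as ``secondary'': after bounding $|\Lambda(f)|\le D\|f^{(n)}\|_\infty$ and invoking Hahn--Banach/Riesz, you obtain only a finite Borel measure $\mu$ with $\Lambda(f)=\int f^{(n)}\,d\mu$ and $\|\mu\|\le D$. Nothing in that construction forces $\mu\ll d\lambda$, and you give no argument for absolute continuity; the parenthetical appeal to \cite{SkAnJOT} would import an a~priori $\eta$ built under stronger hypotheses and would not, by itself, identify it with your $\mu$ or deliver the bound \eqref{etabound}. This gap then propagates: your extension of \eqref{aa7} from $C_c^\infty$ to $F_c^n$ requires $\eta\in L^2((a,b))$ (to pair with the $L^2$-convergent $f_\delta^{(n)}$), which again your construction does not supply.

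The paper's device removes both problems in one stroke: instead of estimating by $\|f^{(n)}\|_\infty$, it first establishes (directly for all $f\in F_c^n((a,b))$, using \eqref{a55} and Lemma~\ref{l3}) the \emph{one-order-lower} bound
\[
|\tau(\mathcal R_{H_0,f,n}(V))|\le \widetilde D_{a,b,n,\epsilon,H_0,V}\,\|f^{(n-1)}\|_\infty,
\]
applies Hahn--Banach/Riesz to obtain a finite measure $\nu$ with $\tau(\mathcal R_{H_0,f,n}(V))=\int_a^b f^{(n-1)}\,d\nu$, and then integrates by parts once (using that $f^{(n-1)}$ is absolutely continuous and compactly supported) to get
\[
\tau(\mathcal R_{H_0,f,n}(V))=\int_a^b f^{(n)}(\lambda)\,\bigl(-\nu([a,\lambda))\bigr)\,d\lambda.
\]
Thus $\eta(\lambda)=-\nu([a,\lambda))$ is an explicit \emph{bounded} function, automatically in $L^1\cap L^2((a,b))$, with $\|\eta\|_{L^1}\le(b-a)\|\nu\|\le(b-a)\widetilde D=D$. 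The bound \eqref{a6} for $f\in D_c^n((a,b))$ with bounded $f^{(n)}$ then follows from the $\|f^{(n-1)}\|_\infty$-estimate by one more application of Lemma~\ref{l3}. No mollification is needed anywhere: the estimate \eqref{a55} already holds on $F_c^{k+1}((a,b))\supset F_c^n((a,b))$ for each $k\le n-1$, so the whole argument runs directly in the target function class.

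A smaller point: your crude bound $|\tau(f(H_0+V))-\tau(f(H_0))|\le\|f\|_\infty\bigl(\tau(E_{H_0}([a,b]))+\tau(E_{H_0+V}([a,b]))\bigr)$ cannot be ``collected into'' the $\max$ in \eqref{const}; the paper obtains the $\max$ by first proving the $n=1$ trace formula with the explicit density $\eta_1(\lambda)=\tau(E_{H_0}([a,\lambda)))-\tau(E_{H_0+V}([a,\lambda)))$, for which $|\eta_1(\lambda)|\le\max\{\tau(E_{H_0}([a,b])),\tau(E_{H_0+V}([a,b]))\}$ because it is a difference of two nonnegative quantities.
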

	
	\begin{proof}
		By Lemma \ref{l2}, $E_{H_0}([a,b])$ and $E_{H_0+V}([a,b])$ are $\tau$-finite projections. Let $f\in F_c^n((a,b))$.
		
		Case 1: $n=1$. \\
		By the spectral theorem for a self-adjoint operator $H$ in $\mathcal{H}$ with $\tau$-compact resolvent, we have
		$$f(H)=f(H)E_{H}([a,b])=\int_{a}^{b}f(\lambda)\,dE_{H}(\lambda),$$
		where the integral converges in $\mathcal{L}^1$. Hence, by continuity of the trace $\tau$,
		\begin{align}\label{z}
			\tau\big(\mathcal{R}_{H_0,f,1}(V)\big)=\int_a^b f(\lambda)d\big(\tau(E_{H_0+V}(\lambda))-\tau(E_{H_0}(\lambda))\big).
		\end{align}
		Integrating by parts on the right-hand side of \eqref{z} and applying the support property
		and absolute continuity of $f$ yield
		\begin{align}
			\label{rem1}
			\nonumber
			\tau\big(\mathcal{R}_{H_0,f,1}(V)\big)
			&=-\int_a^b \big(\tau(E_{H_0+V}([a,\lambda)))-\tau(E_{H_0}([a,\lambda)))\big) df(\lambda)\\
			&=\int_a^b f'(\lambda)\big(\tau(E_{H_0}([a,\lambda)))-\tau(E_{H_0+V}([a,\lambda)))\big)d\lambda.
		\end{align}
		Thus, \eqref{aa7} holds for $n=1$ with
		\begin{align}
			\label{etavianu1}
			\eta_{a,b,1,\epsilon,H_0,V}(\lambda)=\tau(E_{H_0}([a,\lambda)))-\tau(E_{H_0+V}([a,\lambda)))
		\end{align}
		and \eqref{etabound} holds with
		\begin{align*}
			D_{a,b,1,\epsilon,H_0,V}=(b-a)\max\big\{\tau(E_{H_0}([a,b])),\tau(E_{H_0+V}([a,b]))\big\}.
		\end{align*}
		
		Case 2: $n\ge 2$. \\
		By \eqref{rem1},
		\begin{align*}
			&\big|\tau(f(H_0+V))-\tau(f(H_0))\big|\\
			\le\,&\|f'\|_\infty(b-a)
			\max\big\{\tau(E_{H_0}([a,b])),\tau(E_{H_0+V}([a,b]))\big\}.
		\end{align*}
		The latter along with Lemma \ref{l3} implies
		\begin{align}
			\label{33}
			&\big|\tau(f(H_0+V))-\tau(f(H_0))\big|\\
			\nonumber
			&\le\|f^{(n-1)}\|_\infty(b-a)^{n-1}\max\big\{\tau(E_{H_0}([a,b])),\tau(E_{H_0+V}([a,b]))\big\}.
		\end{align}
		Combining \eqref{a55} and \eqref{a4} and then applying Lemma \ref{l3} yield
		
		\begin{align}
			\label{34}
			\nonumber
			\left|\tau\left(\frac{1}{k!}\frac{d^k}{ds^k}f(H_0+sV)\big|_{s=0}\right)\right|&\leq ~C_{a,b,k,\epsilon,H_0}~\|f^{(k)}\|_\infty\|V\|^k\\
			&\leq(b-a)^{n-k-1}C_{a,b,k,\epsilon,H_0}\|f^{(n-1)}\|_\infty\|V\|^k
		\end{align}
		for every $k=1,\dots,n-1$, where $C_{a,b,k,\epsilon,H_0}$ satisfies \eqref{Clabel}.
		Combining \eqref{a5}, \eqref{33}, and \eqref{34} implies
		\begin{align}\label{a9}
			\left|\tau\big(\mathcal{R}_{H_0,f,n}(V)\big)\right|\le\widetilde{D}_{a,b,n,\epsilon,H_0,V}\|f^{(n-1)}\|_\infty,
		\end{align}
		where
		\begin{align*}
			&\widetilde{D}_{a,b,n,\epsilon,H_0,V}\\
			&\quad=(b-a)^{n-1}\max\big\{\tau(E_{H_0}([a,b])),\tau(E_{H_0+V}([a,b]))\big\}\\
			&\hspace{2in}+\sum_{k=1}^{n-1} (b-a)^{n-k-1}C_{a,b,k,\epsilon,H_0}\|V\|^k.
		\end{align*}
		
		By the Riesz representation theorem for elements in $\big(C_0(\mathbb{R})\big)^*$, Hahn-Banach theorem, and estimate \eqref{a9}, there exists a finite (complex) measure $\nu_{a,b,n,\epsilon,H_0,V}$ such that
		\begin{align}\label{a8}
			\tau\big(\mathcal{R}_{H_0,f,n}(V)\big)=\int_{a}^{b}f^{(n-1)}(\lambda)d\nu_{a,b,n,\epsilon,H_0,V}(\lambda)
		\end{align}
		for every $f\in F_c^n((a,b))$ and the total variation of $\nu_{a,b,n,\epsilon,H_0,V}$ is bounded by
		\begin{align}
			\label{nubound}
			\|\nu_{a,b,n,\epsilon,H_0,V}\|\leq \widetilde{D}_{a,b,n,\epsilon,H_0,V}.
		\end{align}
		Integrating by parts on the right-hand side of \eqref{a8} and applying the support property of $f$
		and absolute continuity of $f^{(n-1)}$ yield
		\begin{align}
			\label{a10}
			\tau\big(\mathcal{R}_{H_0,f,n}(V)\big)
			=\int_{a}^{b}f^{(n)}(\lambda)\big(-\nu_{a,b,n,\epsilon,H_0,V}([a,\lambda))\big)d\lambda.
		\end{align}
		Thus, \eqref{a10} implies \eqref{aa7} with
		\begin{align}
			\label{etavianu}
			\eta_{a,b,n,\epsilon,H_0,V}(\lambda)=-\nu_{a,b,n,\epsilon,H_0,V}([a,\lambda)).
		\end{align}
		Combining \eqref{nubound} and \eqref{etavianu} ensures \eqref{etabound}, where
		\begin{align}
			\label{D=tD}
			D_{a,b,n,\epsilon,H_0,V}=(b-a)\widetilde{D}_{a,b,n,\epsilon,H_0,V}.
		\end{align}
		
		Let $\tilde{\eta}_{a,b,n,\epsilon,H_0,V}:=\Re( \eta_{a,b,n,\epsilon,H_0,V})$. Since the left-hand side of \eqref{aa7} is real-valued whenever $f$ is real-valued, we obtain that
		$\tilde{\eta}_{a,b,n,\epsilon,H_0,V}$ satisfies \eqref{aa7} and \eqref{etabound} for real-valued $f\in F_c^n((a,b))$ and, consequently, for all $f\in F_c^n((a,b))$. Therefore, without loss of generality we may consider $\eta_{a,b,n,\epsilon,H_0,V}$ to be real-valued satisfying \eqref{aa7} and \eqref{etabound}. Next,
		suppose there exists another real-valued function $\xi_{a,b,n,H_0,V}\in L^1((a,b))$ satisfying \eqref{aa7}. Let $h_n=\eta_{a,b,n,\epsilon,H_0,V}-\xi_{a,b,n,H_0,V}$. Then, it follows from \eqref{aa7} that
		\begin{align}\label{a11}
			\int_{a}^{b}f^{(n)}(\lambda)h_n(\lambda)d\lambda=0\quad \text{for all }f\in C_c^\infty((a,b)).
		\end{align}
		Consider the distribution $T_{h_n}$ defined by 	
		$$T_{h_n}(\phi)=\int_{a}^{b} \phi\, h_n\,d\lambda$$ for every $\phi\in C^\infty_c((a,b))$.
		By \eqref{a11} and the definition of the derivative of a distribution, $T_{h_n}^{(n)}=0$. Hence by \cite[Theorem 3.10 and Example 2.21]{gwaiz}, $h_n$ is a polynomial of degree at most $n-1$. Consequently, $\eta_{a,b,n,\epsilon,H_0,V}\in L^1((a,b))$ satisfying \eqref{aa7} is unique up to an additive polynomial of degree at most $n-1$.
		
		Assume now that $n\in\mathbb{N}$ and $f\in D_c^n((a,b))$ with $f^{(n)}\in B([a,b])$.
		Applying Lemma \ref{l3} in \eqref{a9} yields \eqref{a6}, completing the proof of the theorem.
	\end{proof}
	\begin{rmrk}
		It follows from the proof of Theorem \ref{mainthm} that the representation \eqref{aa7} with $n=1$ holds for a larger class of functions $f$, namely, for every $f$ absolutely continuous on $[a,b]$ and compactly supported in $(a,b)$.
	\end{rmrk}
	\begin{rmrk}
		The uniqueness of the spectral shift function $\eta_{a,b,n,\epsilon,H_0,V}$ up to an additive polynomial of degree at most $n-1$ was not addressed in \cite[Theorem 4.3]{SkAnJOT}.
	\end{rmrk}
	
	\section{The case of resolvents in the $\tau$-Schatten-von Neumann ideal}
	
	In this section, we obtain an upper bound for $|\tau(\mathcal{R}_{H_0,f,n}(V))|$ that is independent of the support of $f$ under the additional assumption that the resolvent of $H_0$ belongs to $\mathcal{L}^n$, $n\in \mathbb{N}$. As an application, we extend the trace formula \eqref{aa7} to a larger class of scalar functions $f$ and obtain a pointwise bound on the spectral shift function.
	
	The trace formula \eqref{tf} was obtained in \cite{NuSkJST} for the class of scalar functions $\mathfrak{W}_n$ defined below under the assumption that $V(H_0-iI)^{-1}$ belongs to the Schatten-von Neumann ideal associated with $(\Bcal(\Hcal),\Tr)$.
	\begin{dfn}
		\label{fracw}
		Let $n\in\mathbb{N}$. Let $\mathfrak{W}_n$ denote the set of functions $f\in C^n(\R)$ such that
		\begin{enumerate}[(i)]
			\item\label{fracwi} $\widehat{f^{(k)}u^{k}}\in L^{1}(\R)$, $k=0,1,\ldots, n$,
			
			\item\label{fracwii} $f^{(k)}\in L^1\big(\R,(1+|x|)^{k-1}\,dx\big)$, $k=1,\ldots,n$.
		\end{enumerate}
	\end{dfn}
	
	As noted in Remark \ref{rk} below, $F_c^{n}(\R)\not\subset \mathfrak{W}_n$. Our principle goal in this section is to establish the trace formula \eqref{a5} for a larger set of functions containing $F_c^{n}(\R)$. In this context, we introduce the following class of functions.
	
	\begin{dfn}
		Let $n\in \mathbb{N}$. Let $\mathfrak{H}_n$ denote the set of functions $f\in C^{n-1}(\R)$ such that
		\begin{enumerate}[(i)]
			\item $\widehat{f^{(k)}u^{k}}, \widehat{f^{(k)}u^{k+1}}\in L^{1}(\R)$, $k=0,1,\ldots, n-1$,
			\item $f^{(n)}$ exists almost everywhere,
			\item $f^{(k)}\in L^1\big(\R,(1+|x|)^{k}\,dx\big)$, $k=0,1,\ldots,n$.
		\end{enumerate}
	\end{dfn}
	
	
	\begin{rmrk}\label{rk}
		We have
		\begin{align}
			\label{inclusion}
			F_c^{n}(\R)\subset\mathfrak{H}_n\subset \mathfrak{W}_{n-1},
		\end{align}
		but
		\begin{align}
			\label{notinclusion}
			F_c^{n}(\R)\not\subset \mathfrak{W}_{n},\quad \mathfrak{H}_n\not\subset\mathfrak{W}_n,\quad \mathfrak{W}_n\not\subset\mathfrak{H}_n.
		\end{align}
		The inclusions \eqref{inclusion} follow directly from the definitions of the sets.
		Note that the function
		\[f(x)=\begin{cases}
			0 &\text{ if } x<0,\\
			x^n(x-1)^n &\text{ if } 0\leq x\leq 1,\\
			0 &\text{ if } x>1,
		\end{cases}\]
		satisfies $f\in F_c^n(\R)$ but $f\not\in C^n(\R)$. Hence, $f\not\in\mathfrak{W}_n$, so the first two properties in \eqref{notinclusion} hold. Note that $g(x)=(x-i)^{-1}\in \mathfrak{W}_{n}$ but $g\notin \mathfrak{H}_{n}$. Therefore, the third property in \eqref{notinclusion} holds.
	\end{rmrk}
	
	Below we will also use the notations $u(\lambda):=(\lambda-i)$ and $u^{k}(\lambda)=(u(\lambda))^k,\, k\in\mathbb{Z},\, \lambda\in\R.$
	
	\begin{lma}\label{convl}
		Let $f\in \mathfrak{H}_n$. Then, $\widehat{f^{(k)}u^l}\in L^1({\R})$ for $0\leq l\leq k\leq n-1$.
	\end{lma}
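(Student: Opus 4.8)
The plan is to derive the bound $\widehat{f^{(k)}u^l}\in L^1(\R)$ for $0\le l\le k\le n-1$ from the two explicitly postulated families $\widehat{f^{(k)}u^k}\in L^1(\R)$ and $\widehat{f^{(k)}u^{k+1}}\in L^1(\R)$ in the definition of $\mathfrak{H}_n$, interpolating in the exponent $l$. The key observation is the elementary identity, valid pointwise on $\R$,
\begin{align*}
	f^{(k)}(\lambda)u^l(\lambda)=\frac{1}{u(\lambda)^{k-l}}\,f^{(k)}(\lambda)u^k(\lambda),
\end{align*}
so that, on the Fourier side,
\begin{align*}
	\widehat{f^{(k)}u^l}=\widehat{u^{-(k-l)}}\ast\widehat{f^{(k)}u^k}.
\end{align*}
Since $\widehat{f^{(k)}u^k}\in L^1(\R)$ by hypothesis (i) of $\mathfrak{H}_n$, it would suffice to know that the function $\lambda\mapsto u(\lambda)^{-(k-l)}=(\lambda-i)^{-(k-l)}$ has an $L^1$ Fourier transform whenever $k-l\ge 1$; then Young's inequality $\|g\ast h\|_1\le\|g\|_1\|h\|_1$ closes the argument. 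This reduces the lemma to a one-variable computation: for an integer $m\ge 1$, the function $(\lambda-i)^{-m}$ is smooth, bounded, and decays like $|\lambda|^{-m}$, hence lies in $L^1(\R)$ for $m\ge 2$ and in $L^2(\R)$ for $m\ge 1$; its Fourier transform can be computed explicitly by contour integration (a residue at $\lambda=i$) and equals a constant multiple of $t^{m-1}e^{-t}\mathbf 1_{t>0}$ (up to orientation/sign conventions), which is manifestly in $L^1(\R)$.

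For the borderline case $m=1$, i.e. $l=k-1$, the function $(\lambda-i)^{-1}$ is not integrable, so the convolution identity above needs one extra derivative of decay; here I would instead write $f^{(k)}u^{k-1}=u^{-2}\cdot f^{(k)}u^{k+1}$ and convolve $\widehat{u^{-2}}\in L^1(\R)$ with $\widehat{f^{(k)}u^{k+1}}\in L^1(\R)$ (hypothesis (i) again, using the $u^{k+1}$ family). More generally, to reach an arbitrary $l$ with $0\le l\le k-1$ one picks whichever of the two available families, $u^k$ or $u^{k+1}$, makes the remaining power $u^{-(k-l)}$ or $u^{-(k+1-l)}$ have exponent at least $2$ — and since $k-l\ge 1$, the exponent $k+1-l\ge 2$ always works, so in fact the single identity
\begin{align*}
	\widehat{f^{(k)}u^l}=\widehat{u^{-(k+1-l)}}\ast\widehat{f^{(k)}u^{k+1}}
\end{align*}
with $k+1-l\ge 2$ handles every case $0\le l\le k-1$ uniformly, and the case $l=k$ is hypothesis (i) verbatim.

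The main (minor) obstacle is purely the bookkeeping of Fourier-transform conventions and the explicit evaluation of $\widehat{u^{-m}}$: one must confirm that $(\lambda-i)^{-m}\in L^1(\R)\cap L^2(\R)$ for $m\ge 2$ so that the Fourier transform is well-defined classically, and that the residue computation indeed yields a function supported on a half-line of the form $c_m\,t^{m-1}e^{-t}\mathbf 1_{t>0}$ (or $\mathbf 1_{t<0}$, depending on conventions), which is integrable. Once this auxiliary fact is in hand, the lemma follows immediately from Young's inequality applied to the displayed convolution identity, using hypothesis (i) of the definition of $\mathfrak{H}_n$. No use of hypotheses (ii) or (iii) is needed for this particular statement.
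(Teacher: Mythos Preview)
Your proof is correct and takes essentially the same approach as the paper: factor $f^{(k)}u^l$ as a function with known $L^1$ Fourier transform times a negative power of $u$, then apply the convolution theorem and Young's inequality. The only difference is that the paper uses $f^{(k)}u^l=(f^{(k)}u^k)\,u^{-(k-l)}$ directly and invokes $\widehat{u^{-m}}\in L^1(\R)$ for \emph{every} $m\ge 1$, including $m=1$: since $u^{-1}\in L^2(\R)$ and $(u^{-1})'=-u^{-2}\in L^2(\R)$, one already has $\widehat{u^{-1}}\in L^1(\R)$ (it is a one-sided decaying exponential, exactly as your residue computation would show for $m=1$). Your concern that $(\lambda-i)^{-1}\notin L^1(\R)$ is therefore misplaced --- integrability of $u^{-1}$ itself is not needed --- and the detour through the $u^{k+1}$-hypothesis to force the exponent to be at least $2$ is valid but unnecessary.
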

	\begin{proof}
		For $k=l$, the result follows from the definition of $\mathfrak{H}_n$.
		
		Let $l<k$. Since $u^{-k},u^{-k-1}\in L^2(\R)$, we obtain $\widehat{u^{-k}}\in L^1(\R)$ for all $k\in\N$ (see, e.g., \cite[Lemma 7]{PoSu09}).
		By the convolution theorem,
		\[\widehat{f^{(k)}u^l}=\widehat{\big(f^{(k)}u^{k}u^{l-k}\big)}=\widehat{f^{(k)}u^{k}}*\widehat{u^{l-k}}.\]
		Since $L^1(\R)$ is closed under the convolution product,
		$\widehat{f^{(k)}u^l}\in L^1(\R)$.
	\end{proof}
	
	\begin{lma}\label{exp-lem}
		Let $n\in\mathbb{N}$. Let $H_0$ be a self-adjoint operator affiliated with $\mathcal{M}$ and let $V$ be a self-adjoint operator in $\mathcal{M}$. Let $H_t=H_0+tV,\, t\in\R$, and $\tilde{V}=V(H_0-iI)^{-1}$. Then
		\begin{align}\label{ddd1}
			\nonumber&T^{H_t,H_0,\ldots,H_0}_{f^{[n-1]}}(V,\ldots,V)
			=(-1)^{n-1}f(H_t)\,\tilde{V}^{n-1}\\
			\nonumber&+\sum_{p=1}^{n-1}\sum_{\substack{j_1,\ldots,j_{p}\geq1,j_{p+1}\geq0\\j_1+\cdots+j_{p+1}=n-1}}(-1)^{n-p-1}\, \Big(T^{H_t,H_0,\ldots,H_0}_{(fu^{p+1})^{[p]}}(\tilde{V}^{j_1},\ldots,
			\tilde{V}^{j_{p}}(H_0-iI)^{-1})\\
			&\hspace{2cm}\times\tilde{V}^{j_{p+1}}-T^{H_t,H_0,\ldots,H_0}_{(fu^{p})^{[p-1]}}(\tilde{V}^{j_1},\ldots,\tilde{V}^{ j_{p-1}})\tilde{V}^{j_p}(H_0-iI)^{-1}\tilde{V}^{j_{p+1}}\Big)
		\end{align}
		for all $f\in\mathfrak{H}_n$.
		
	\end{lma}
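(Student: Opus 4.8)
The strategy is to reduce \eqref{ddd1} to two ingredients and then bookkeep. Write $R_0=(H_0-iI)^{-1}\in\Mcal$ and $u(\lambda)=\lambda-i$, and recall the Leibniz identity for divided differences: since $u^{[1]}\equiv1$ and $u^{[m]}\equiv0$ for $m\ge2$, $(gu)^{[k]}(\mu_0,\ldots,\mu_k)=g^{[k]}(\mu_0,\ldots,\mu_k)\,u(\mu_k)+g^{[k-1]}(\mu_0,\ldots,\mu_{k-1})$ for $g\in C^k(\R)$, and by symmetry of divided differences $\mu_k$ may be taken to be any of the arguments. The first ingredient is a one-step ``peeling'' identity: rewriting the last operator argument as $A=(AR_0)(H_0-iI)$, turning $(H_0-iI)\,dE_{H_0}(\cdot)$ into the scalar factor $u(\cdot)$ inside the symbol, using the Leibniz identity, and contracting the resulting redundant last spectral variable, one gets, for all $A_1,\ldots,A_k\in\Mcal$,
\[
T^{H_t,H_0,\ldots,H_0}_{(fu^r)^{[k]}}(A_1,\ldots,A_k)=T^{H_t,H_0,\ldots,H_0}_{(fu^{r+1})^{[k]}}(A_1,\ldots,A_{k-1},A_kR_0)-T^{H_t,H_0,\ldots,H_0}_{(fu^r)^{[k-1]}}(A_1,\ldots,A_{k-1})\,A_kR_0 .
\]
All symbols occurring below are of the form $(fu^r)^{[k]}$ with $0\le k\le n-1$ and $0\le r\le k+1$; these are admissible, since $(fu^r)^{(k)}$ is a linear combination of the functions $f^{(l)}u^{\,r-k+l}$ with $0\le l\le k$ and $0\le r-k+l\le l+1$, and $\widehat{f^{(l)}u^{j}}\in L^1(\R)$ for $0\le j\le l+1$, $l\le n-1$ (by the definition of $\mathfrak{H}_n$, Lemma~\ref{convl}, and closure of $L^1(\R)$ under convolution), so the multiple operator integrals in \eqref{a1} are all well-defined.

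The second ingredient is the scalar combinatorial identity: for $0\le s\le m$,
\[
f^{[m]}(\lambda_0,\lambda_1,\ldots,\lambda_m)\prod_{j=1}^{s}u(\lambda_j)=\sum_{D\subseteq\{1,\ldots,s\}}(-1)^{|D|}\,(fu^{\,s-|D|})^{[m-|D|]}\bigl(\lambda_0,(\lambda_j)_{j\in\{1,\ldots,m\}\setminus D}\bigr),
\]
proved by induction on $s$: the inductive step peels off $u(\lambda_{s+1})$ and applies the Leibniz identity (with $\lambda_{s+1}$ moved to the last slot) to each summand, which splits each $D$ into $D$ and $D\cup\{s+1\}$.

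Now assemble. In $T^{H_t,H_0,\ldots,H_0}_{f^{[n-1]}}(V,\ldots,V)$ replace each of the $n-1$ copies of $V$ by $\tilde V=VR_0$ via $V\,dE_{H_0}(\lambda_j)=u(\lambda_j)\,\tilde V\,dE_{H_0}(\lambda_j)$, and apply the scalar identity with $s=m=n-1$; this writes the left side of \eqref{ddd1} as a finite sum of multiple operator integrals in the arguments $\tilde V,\ldots,\tilde V$, the term indexed by $D\subseteq\{1,\ldots,n-1\}$ having coefficient $(-1)^{|D|}$ and symbol $(fu^{n-1-|D|})^{[n-1-|D|]}$ evaluated at $\lambda_0$ and the $\lambda_j$ with $j\notin D$. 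The term $D=\{1,\ldots,n-1\}$ equals $(-1)^{n-1}f(H_t)\tilde V^{n-1}$; for a proper $D$ with complement $\{m_1<\cdots<m_p\}$ ($p=n-1-|D|\ge1$) the symbol depends only on $\lambda_0,\lambda_{m_1},\ldots,\lambda_{m_p}$, so integrating out the other spectral variables merges the intervening copies of $\tilde V$ and turns the term into $(-1)^{n-p-1}T^{H_t,H_0,\ldots,H_0}_{(fu^p)^{[p]}}(\tilde V^{j_1},\ldots,\tilde V^{j_p})\,\tilde V^{j_{p+1}}$ with $j_1=m_1$, $j_l=m_l-m_{l-1}$, $j_{p+1}=n-1-m_p$; as $D$ ranges over the proper subsets, $(j_1,\ldots,j_{p+1})$ ranges over all compositions of $n-1$ with $j_1,\ldots,j_p\ge1$ and $j_{p+1}\ge0$. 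This produces the representation
\[
T^{H_t,H_0,\ldots,H_0}_{f^{[n-1]}}(V,\ldots,V)=(-1)^{n-1}f(H_t)\tilde V^{n-1}+\sum_{p=1}^{n-1}\ \sum_{\substack{j_1,\ldots,j_p\ge1,\ j_{p+1}\ge0\\ j_1+\cdots+j_{p+1}=n-1}}(-1)^{n-p-1}\,T^{H_t,H_0,\ldots,H_0}_{(fu^p)^{[p]}}(\tilde V^{j_1},\ldots,\tilde V^{j_p})\,\tilde V^{j_{p+1}} .
\]
Finally, applying the peeling identity once more (with $r=k=p$ and last argument $\tilde V^{j_p}$) and multiplying by $\tilde V^{j_{p+1}}$ on the right rewrites each summand as
\[
T^{H_t,H_0,\ldots,H_0}_{(fu^{p+1})^{[p]}}(\tilde V^{j_1},\ldots,\tilde V^{j_p}R_0)\,\tilde V^{j_{p+1}}-T^{H_t,H_0,\ldots,H_0}_{(fu^p)^{[p-1]}}(\tilde V^{j_1},\ldots,\tilde V^{j_{p-1}})\,\tilde V^{j_p}R_0\,\tilde V^{j_{p+1}} ,
\]
which is exactly the bracketed expression in \eqref{ddd1}; substituting gives the claim.

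The two points needing genuine care are: (i) the manipulation showing that replacing $V$ by $\tilde V$ inside a multiple operator integral yields exactly the factor $u(\lambda_j)$ in the symbol — routine but not entirely automatic, carried out on the Riemann sums defining \eqref{a1} (or, equivalently, via the standard absorption of a function of $H_0$ into an adjacent argument, cf.\ \cite{ST19}), a mechanism that also underlies the peeling identity; and (ii) the combinatorics — the bijection between proper subsets $D$ and compositions $(j_1,\ldots,j_{p+1})$, the fact that contracting the redundant variables produces precisely the blocks $\tilde V^{j_i}$, and the collapse of the signs $(-1)^{|D|}=(-1)^{n-p-1}$. Alternatively, one may run the whole argument as an induction on $n$: peel off the last $V$ by the identity above with $r=0$, apply the inductive hypothesis to the resulting lower-order term, and handle the remaining term $T^{H_t,H_0,\ldots,H_0}_{(fu)^{[n-1]}}(V,\ldots,V,\tilde V)$ by the same replace-and-contract procedure.
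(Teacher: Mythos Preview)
Your argument is correct and follows essentially the same route as the paper: the scalar combinatorial identity you prove by induction is precisely \cite[Equation (24)]{NuSkJST} (after dividing by $\prod_j u(\lambda_j)$), the passage to the intermediate operator identity is what the paper obtains by citing \cite[Lemmas 3.5, 5.1, 5.2]{PSS}, and your final ``peeling'' step is exactly the application of \cite[Theorem 3.10(i)]{NuSkJST}. The only difference is that you supply self-contained proofs of the ingredients the paper outsources to these references.
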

	
	\begin{proof}
		The identity \eqref{ddd1} is trivial in the case $n=1$.
		
		Let $n\geq 2$. It was noted in \cite[Equation (24)]{NuSkJST} that
		\begin{align}	
			\label{dd1}	
			&f^{[n-1]}(\lambda_0,\ldots,\lambda_{n-1})\\
			\nonumber
			&=\sum_{p=0}^{n-1}\sum_{0<j_1<\cdots<j_{p}\leq n-1}(-1)^{n-1-p}
			(fu^p)^{[p]}(\lambda_0,\lambda_{j_1},\ldots,\lambda_{j_p})\\
			\nonumber &\hspace{2.5in}\times u^{-1}(\lambda_1)\cdots u^{-1}(\lambda_{n-1}).
		\end{align}
		By Lemma \ref{convl}, $\widehat{f^{(n-1)}},\,\widehat{(fu^p)^{(p)}}\in L^1(\R)$ for $0\leq p\leq n-1$. Therefore, applying \cite[Lemmas 3.5, 5.1, 5.2]{PSS} yields
		\begin{align}\label{dd2}
			&T^{H_t,H_0,\ldots,H_0}_{f^{[n-1]}}(V,\ldots,V)\\
			\nonumber
			&=(-1)^{n-1}f(H_t)\tilde{V}^{n-1}\\
			\nonumber&\qquad+\sum_{p=1}^{n-1}\sum_{\substack{j_1,\ldots,j_{p}\geq1,j_{p+1}\geq0\\j_1+\cdots+j_{p+1}=n-1}}\!(-1)^{n-1-p}\, T^{H_t,H_0,H_0,\ldots,H_{0}}_{(fu^p)^{[p]}}(\tilde{V}^{j_1},\ldots,\tilde{V}^{j_p})\,	\tilde{V}^{j_{p+1}}.
		\end{align}
		By Lemma \ref{convl} $\widehat{(fu^p)^{(p)}}, \widehat{(fu^{p+1})^{(p)}},$ and $\widehat{(fu^{p})^{(p-1)}}\in L^1(\R)$ for $1\leq p\leq n-1$. Therefore, applying \cite[Theorem 3.10(i)]{NuSkJST} to \eqref{dd2} yields \eqref{ddd1}.
	\end{proof}
	
	\begin{thm}\label{resolvent trace thm}
		Let $n\in\N$, let $H_0$ be a self-adjoint operator affiliated with $\mathcal{M}$ such that $(H_0-iI)^{-1}\in\mathcal{L}^{n}$, and let $V$ be a self-adjoint operator in $\mathcal{M}$. Then, there exists constant $K_n>0$ and a real-valued function $\eta_n$ such that
		\begin{align}\label{rr0}
			|\eta_n(x)|\leq K_n\,(2+\|V\|)\, \|V\|^{n-1}\,\|(H_0-iI)^{-1}\|_n^n\,(1+|x|)^n,\quad x\in\R,
		\end{align} and
		\begin{align}
			\label{rr1}
			\tau(\mathcal{R}_{H_0,f,n}(V))=\int_\R f^{(n)}(x)\eta_n(x)\,dx\,
		\end{align}
		for every $f\in \mathfrak{H}_{n}\cup \mathfrak{W}_n$.
		The locally integrable function $\eta_n$ is determined by \eqref{rr1} uniquely up to a polynomial summand of degree at most $n-1$.
	\end{thm}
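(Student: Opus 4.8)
The plan is to establish \eqref{rr1} and \eqref{rr0} first for $f\in\mathfrak{H}_n$, exploiting the expansion of Lemma \ref{exp-lem} and the estimates of Theorems \ref{inv-thm}, \ref{est-thm} and \ref{th1}, and then to treat $f\in\mathfrak{W}_n$ by the (semifinite version of the) argument of \cite[Theorem 4.1]{NuSkJST} together with the uniqueness statement. Fix $f\in\mathfrak{H}_n$. By Lemma \ref{convl}, $f$ meets the hypotheses of Theorem \ref{th1} with $n$ replaced by $n-1$, so the G\^{a}teaux derivatives of $s\mapsto f(H_0+sV)$ up to order $n-1$ exist, $\mathcal{R}_{H_0,f,n}(V)$ is well defined, and iterating the identities $T^{H_0+V,H_0,\dots,H_0}_{f^{[k]}}(V,\dots,V)-T^{H_0,\dots,H_0}_{f^{[k]}}(V,\dots,V)=T^{H_0+V,H_0,\dots,H_0}_{f^{[k+1]}}(V,\dots,V)$ and $T^{H_0,\dots,H_0}_{f^{[k]}}(V,\dots,V)=\frac{1}{k!}\frac{d^k}{ds^k}f(H_0+sV)\big|_{s=0}$ gives
\[
\mathcal{R}_{H_0,f,n}(V)=T^{H_0+V,H_0,\dots,H_0}_{f^{[n-1]}}(V,\dots,V)-T^{H_0,\dots,H_0}_{f^{[n-1]}}(V,\dots,V),
\]
with $n-1$ copies of $V$ in each multiple operator integral. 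Applying Lemma \ref{exp-lem} at $t=1$ and at $t=0$ and subtracting expresses the right-hand side as a finite linear combination of $\big(f(H_0+V)-f(H_0)\big)\tilde V^{\,n-1}$, where $\tilde V=V(H_0-iI)^{-1}$, and of products of multiple operator integrals with symbols $(fu^{p+1})^{[p]}$, $(fu^{p})^{[p-1]}$, $1\le p\le n-1$, with powers of $\tilde V$ and of $(H_0-iI)^{-1}$, each summand containing exactly $n$ factors from $\mathcal{L}^n$; hence each summand, and $\mathcal{R}_{H_0,f,n}(V)$ itself, lies in $\mathcal{L}^1$. (One uses here $f(H_0+V)-f(H_0)\in\mathcal{L}^n$, which follows by writing $f=(fu)u^{-1}$, using the second resolvent identity, and noting $fu\in C_0(\R)$ because $\widehat{fu}\in L^1(\R)$ by the definition of $\mathfrak{H}_n$.)

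Now take $\tau$ of the displayed identity and treat each summand in turn. By cyclicity of $\tau$, the symbol reduction from the proof of Theorem \ref{est-thm}, and the divided-difference identities of type \eqref{dd1} used to absorb the resolvents and to dispose of the single bounded factor $V$ wherever it occurs (each such step costing a factor $\|V\|$, which produces the summand $\|V\|$ in $2+\|V\|$), every term is reduced to a trace to which a one-dimensional trace formula applies; by Lemma \ref{convl} and property (i) of $\mathfrak{H}_n$ the symbols involved lie in $C_0(\R)$ with integrable Fourier transforms, so Theorems \ref{inv-thm} and \ref{est-thm} provide the required bounds. This represents $\tau(\mathcal{R}_{H_0,f,n}(V))$ as a finite sum of integrals of derivatives of $f,\,fu,\dots,\,fu^{n}$ against finite signed measures (with bounded-variation distribution functions), whose total variations are dominated, through \eqref{est}, by $\mathrm{const}_n\,(2+\|V\|)\,\|V\|^{n-1}\,\|(H_0-iI)^{-1}\|_n^{\,n}$. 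Expanding those derivatives by the Leibniz rule and integrating by parts the requisite number of times — the boundary terms vanishing because of the weighted decay $f^{(k)}\in L^1(\R,(1+|x|)^k\,dx)$ built into $\mathfrak{H}_n$ — collapses everything into $\int_\R f^{(n)}(x)\eta_n(x)\,dx$, with $\eta_n$ locally integrable and, by the above control of total variations, satisfying $|\eta_n(x)|\le\mathrm{const}_n\,(2+\|V\|)\,\|V\|^{n-1}\,\|(H_0-iI)^{-1}\|_n^{\,n}\,(1+|x|)^n$; passing to the real part as in the proof of Theorem \ref{mainthm} makes $\eta_n$ real-valued, and \eqref{rr0} follows. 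The principal obstacle is precisely this last manipulation: since $f$ is only assumed $C^{n-1}$, the symbols carry at most $n-1$ derivatives of $f$, so the missing top derivative has to be manufactured from the difference structure $T^{H_0+V,\dots}-T^{H_0,\dots}$ together with one further integration by parts, all while keeping exact track of the polynomial prefactors responsible for the $(1+|x|)^n$ growth and verifying that each boundary term vanishes — which is exactly where the decay conditions defining $\mathfrak{H}_n$, rather than compact support or $C^n$-smoothness, are needed.

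It remains to treat $\mathfrak{W}_n$ and to establish uniqueness. For $f\in\mathfrak{W}_n$, the argument of \cite[Theorem 4.1]{NuSkJST}, carried out in the semifinite setting with Theorems \ref{inv-thm}, \ref{est-thm} and \ref{th1} in the roles of their $\Bcal(\Hcal)$-counterparts and using $V(H_0-iI)^{-1}\in\mathcal{L}^n$ (a consequence of $(H_0-iI)^{-1}\in\mathcal{L}^n$), yields \eqref{rr1} with some locally integrable $\tilde\eta_n$ obeying the bound \eqref{rr0}. Since $C_c^\infty(\R)\subset F_c^n(\R)\subset\mathfrak{H}_n$ by \eqref{inclusion} and $C_c^\infty(\R)\subset\mathfrak{W}_n$ directly from the definition, both representations hold for every $\phi\in C_c^\infty(\R)$, hence $\int_\R\phi^{(n)}(\eta_n-\tilde\eta_n)\,dx=0$, and the distributional argument from the proof of Theorem \ref{mainthm} (via \cite[Theorem 3.10 and Example 2.21]{gwaiz}) shows $\eta_n-\tilde\eta_n$ to be a polynomial of degree at most $n-1$. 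As $\int_\R f^{(n)}(x)\,q(x)\,dx=0$ for every polynomial $q$ of degree at most $n-1$ and every $f\in\mathfrak{H}_n\cup\mathfrak{W}_n$ — integrate by parts, the boundary terms vanishing by the weighted-$L^1$ decay in the two definitions, using in the $\mathfrak{W}_n$ case that $f^{(n)}\in L^1(\R,(1+|x|)^{n-1}\,dx)$ forces $f^{(n-1)}(x)\,x^{n-1}\to0$ — the single function $\eta_n$ represents $\tau(\mathcal{R}_{H_0,f,n}(V))$ for every $f\in\mathfrak{H}_n\cup\mathfrak{W}_n$. Applying the distributional argument once more within $C_c^\infty(\R)$ shows that $\eta_n$ is determined by \eqref{rr1} uniquely up to a polynomial summand of degree at most $n-1$.
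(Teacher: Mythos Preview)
Your proposal is correct and follows essentially the same route as the paper's proof: represent $\mathcal{R}_{H_0,f,n}(V)$ as the difference $T^{H_0+V,H_0,\dots,H_0}_{f^{[n-1]}}-T^{H_0,\dots,H_0}_{f^{[n-1]}}$, apply Lemma~\ref{exp-lem} at $t=1$ and $t=0$, estimate via Theorems~\ref{inv-thm} and~\ref{est-thm}, then invoke the argument of \cite[Proposition~4.2]{NuSkJST} (your ``Leibniz rule and integration by parts'' step) to produce $\eta_n$ with the bound \eqref{rr0}; the $\mathfrak{W}_n$ case, the real-part reduction, the distributional uniqueness, and the polynomial reconciliation of the two candidate $\eta$'s are all exactly as in the paper. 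The only cosmetic differences are that the paper treats $\mathfrak{W}_n$ first and $\mathfrak{H}_n$ second, and spells out the $n=1$ case of $\mathfrak{H}_n$ and the identity $T_{f^{[1]}}^{H_0+V,H_0}(V)=T_{(fu)^{[1]}}^{H_0+V,H_0}(\tilde V)-f(H_0+V)\tilde V$ (your ``dispose of the single bounded factor $V$'' step) explicitly.
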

	
	\begin{proof}
		The result for $\mathcal{M}=\mathcal{B}(\mathcal{H})$ and $f\in\mathfrak{W}_n$ is established in \cite[Theorem 4.1]{NuSkJST}. It extends to the case of a general semifinite $(\mathcal{M},\tau)$ by replacing results for $\mathcal{B}(\mathcal{H})$ with completely analogous results for $\mathcal{M}$ (see Theorems \ref{inv-thm}, \ref{est-thm}, \ref{th1} and \cite[Lemma 5.1]{PaWiSu02}).
		Therefore, there exists a real-valued function $\eta_{_{\mathfrak{W}_n}}$, unique up to a polynomial summand of degree at most $n-1$, satisfying \eqref{rr1} for all $f\in\mathfrak{W}_n$.
		
		\noindent Now we assume that $f\in\mathfrak{H}_n$. If $n=1$, then
		\begin{align}\label{dddd1}
			&\nonumber\lvert\tau(\mathcal{R}_{H_0,f,n}(V))\rvert\\
			\nonumber =&\,\lvert\tau(f(H_0+V)-f(H_0))\rvert\\
			\nonumber=&\,\lvert\tau((fu)(H_0+V)(H_0+V-iI)^{-1}-(fu)(H_0)(H_0-iI)^{-1})\rvert\\
			\leq&\, \|fu\|_\infty\, (2+\|V\|)\,\|(H_0-iI)^{-1}\|_1.
		\end{align}
		Let $n\geq 2$. By Theorem \ref{th1} and \cite[Theorem 4.3.14]{ST19},
		\begin{align}\label{r1}
			\nonumber
			\mathcal{R}_{H_0,f,n}(V)=&\mathcal{R}_{H_0,f,n-1}(V)-\frac{1}{(n-1)!}\frac{d^{n-1}}{ds^{n-1}}f(H_0+sV)\big|_{s=0}\\
			=&T_{f^{[n-1]}}^{H_0+V,H_0,\dots,H_0}(V,\ldots,V)-T_{f^{[n-1]}}^{H_0,\dots,H_0}(V,\ldots,V).
		\end{align}
		Let $H_t=H_0+tV,\, t\in\R$, and $\tilde{V}=V(H_0-iI)^{-1}$.
		By Lemma \ref{exp-lem} applied to \eqref{r1},
		\begin{align}\label{r3}
			&\mathcal{R}_{H_0,f,n}(V)\\		
			\nonumber	&=(-1)^{n-1}T_{f^{[1]}}^{H_0+V,H_0}(V)\tilde{V}^{n-1}+\sum_{p=1}^{n-1}\sum_{\substack{j_1,\ldots,j_{p}\geq1,j_{p+1}\geq0\\j_1+ \cdots+j_{p+1}=n-1}}(-1)^{n-p-1}\\
			\nonumber&\times\Big[\Big(T^{H_0+V,H_0,\ldots,H_0}_{(fu^{p+1})^{[p]}}(\tilde{V}^{j_1},\ldots,\tilde{V}^{j_{p}}(H_0-iI)^{-1})\\
			\nonumber&\hspace{1.5in}-T^{H_0,H_0,\ldots,H_0}_{(fu^{p+1})^{[p]}}(\tilde{V}^{j_1},\ldots,\tilde{V}^{j_{p}}(H_0-iI)^{-1})\Big)\tilde{V}^{j_{p+1}}\\
			\nonumber			\nonumber&\quad-\Big(T^{H_0+V,H_0,\ldots,H_0}_{(fu^{p})^{[p-1]}}(\tilde{V}^{j_1},\ldots,\tilde{V}^{j_{ p-1}})-T^{H_0,H_0,\ldots,H_0}_{(fu^{p})^{[p-1]}}(\tilde{V}^{j_1},\ldots,\tilde{V}^{ j_{p-1}})\Big)\\
			\nonumber&\hspace{3in}\times{\tilde{V}^{j_p}(H_0-iI)^{-1}}\tilde{V}^{j_{p+1}}\Big].
		\end{align}
		Note that, if $j_{p+1}=0$, then by definition of the multiple operator integral we have
		\begin{align}\label{new}
			\nonumber&T^{H_0+V,H_0,\ldots,H_0}_{(fu^{p+1})^{[p]}}(\tilde{V}^{j_1},\ldots,\tilde{V}^{j_{p}}(H_0-iI)^{-1})\\
			&=T^{H_0+V,H_0,\ldots,H_0}_{(fu^{p+1})^{[p]}}(\tilde{V}^{j_1},\ldots,\tilde{V}^{j_{p}})(H_0-iI)^{-1}.
		\end{align}
		By Lemma \ref{convl}, $\widehat{f},\,\widehat{f'},\,\widehat{(fu)'}\in L^1(\R)$. Applying \cite[Theorem 3.10(i)]{NuSkJST} yields
		\begin{align}\label{dd}
			T_{f^{[1]}}^{H_0+V,H_0}(V)=T_{(fu)^{[1]}}^{H_0+V,H_0}(\tilde{V})-f(H_0+V)\tilde{V}.
		\end{align}
		Combining \eqref{r3}, \eqref{new}, \eqref{dd}, Theorems \ref{inv-thm} and \ref{est-thm}, and applying H\"{o}lder's inequality yields
		\begin{align}\label{dddd2}
			\nonumber&\left|\tau(\mathcal{R}_{H_0,f,n}(V))\right|\\
			\nonumber&\leq\Big[\left(\|f\|_\infty+\|(fu)'\|_\infty\right)\|V\|^{n}\\
			\nonumber&\hspace{.5in}+\sum_{p=1}^{n-1}d_{p,n}\left(\|(fu^{p+1})^{(p)}\|_\infty
			+\|(fu^{p})^{(p)}\|_\infty\right)\|V\|^{n-1}\Big]\|(H_0-iI)^{-1}\|_n^n\\
			\nonumber&\leq C_n\Big(\sum_{p=1}^{n-1}\|(fu^{p+1})^{(p)}\|_\infty
			+\sum_{p=0}^{n-1}\|(fu^{p})^{(p)}\|_\infty\Big)\\
			&\hspace{2in}\times(1+\|V\|)\|V\|^{n-1}\|(H_0-iI)^{-1}\|_n^n,
		\end{align}
		where $C_n$ is some constant depending only on $n$.
		Arguing similarly to the proof of the existence of the spectral shift function in \cite[Proposition 4.2]{NuSkJST} we obtain from \eqref{dddd1} and \eqref{dddd2} that for each $n\in\Nat$,
		\begin{align}\label{r6}
			\tau\left(\mathcal{R}_{H_0,f,n}(V)\right)=\int_{\R} f^{(n)}(x)\acute\eta_n(x)dx,
		\end{align}
		where $\acute\eta_n$ is a continuous function on $\R$ such that
		\begin{align}\label{r7}
			|\acute\eta_n(x)|\leq D_n\,(2+\|V\|)\|V\|^{n-1}\|(H_0-iI)^{-1}\|_n^n(1+|x|)^n,\quad x\in\R{,}
		\end{align}
		for some constant $D_n>0$. We define $$\eta_{_{\mathfrak{H}_n}}:=\Re(\acute\eta_n).$$ Then it is clear that $\eta_{_{\mathfrak{H}_n}}$ satisfies \eqref{r7} as $|\eta_{_{\mathfrak{H}_n}}|\leq|\acute\eta_{n}|$.
		
		Since the left-hand side of \eqref{r6} is real-valued whenever $f$ is real-valued, we obtain that
		$\eta_{_{\mathfrak{H}_n}}$ satisfies \eqref{rr1} for real-valued $f\in \mathfrak{H}_{n}$ and, consequently, for all $f\in \mathfrak{H}_{n}$.
		The uniqueness of $\eta_{_{\mathfrak{H}_n}}$ satisfying \eqref{rr1} up to a polynomial summand of degree at most $n-1$ can be established completely analogously to the uniqueness of the function $\eta_{a,b,n,\epsilon,H_0,V}$ established in Theorem \ref{mainthm}. Since both $\eta_{_{\mathfrak{H}_n}}$ and $\eta_{_{\mathfrak{W}_n}}$ satisfy \eqref{rr1} for all $f\in C_c^{\infty}(\R)$, by using properties of distributions as in the proof of Theorem \ref{mainthm}, we conclude that
		$\eta_{_{\mathfrak{H}_n}}-\eta_{_{\mathfrak{W}_n}}=Q_n$, where $Q_n$ is a polynomial of degree at most $n-1$.
		By Definition \ref{fracw} and integration by parts,
		$\int_{\R} f^{(n)}(x)Q_n(x)dx=0$ for each $f\in \mathfrak{W}_n$. Therefore,
		\begin{align*}
			\int_{\R} f^{(n)}(x)\eta_{_{\mathfrak{W}_n}}dx=\int_{\R} f^{(n)}(x)(\eta_{_{\mathfrak{H}_n}}-Q_n)dx
			=\int_{\R} f^{(n)}(x)\eta_{_{\mathfrak{H}_n}}dx,
		\end{align*}
		for each $f\in \mathfrak{W}_n$. Hence $\eta_n:=\eta_{_{\mathfrak{H}_n}}$ satisfies \eqref{rr0} and \eqref{rr1} for all $f\in \mathfrak{H}_n\cup\mathfrak{W}_n$.
	\end{proof}
	
	\section*{Declarations} 	
	
	{\textbf{Conflicts of interests:}}	The authors declare that they have no conflict of interest. No datasets were generated or analyzed during the current study.
	
	
	\subsection*{Acknowledgment}
	\begin{justify}
		{\it The research of the first named author is supported by the Mathematical Research Impact Centric Support (MATRICS) grant, File No: MTR/2019/\\000640, by the Science and Engineering Research Board (SERB), Department of Science $\&$ Technology (DST), Government of India. The second named author gratefully acknowledge the support provided by IIT Guwahati, Government of India. The research of the third named author is supported in part by NSF grant DMS-1554456.}
	\end{justify}

\end{document}